\documentclass{article}
\usepackage{amsmath,amsthm,amssymb,graphicx}
\usepackage{tikz}

%
%

\newtheorem{theorem}{Theorem}[section]
\newtheorem{lemma}[theorem]{Lemma}
\newtheorem{proposition}[theorem]{Proposition}

\newtheorem{definition}[theorem]{Definition}

\theoremstyle{remark}
\newtheorem{remark}[theorem]{Remark}
\newtheorem{example}[theorem]{Example}

\begin{document}

\title{Explicit Boij--S\"oderberg theory of ideals\\ from a graph isomorphism reduction}

\author{Alexander Engstr\"om \and Laura Jakobsson \and Milo Orlich\footnote{{Department of Mathematics and Systems Analysis, Aalto University, Espoo, Finland \newline \tt \{alexander.engstrom, laura.p.jakobsson, milo.orlich\} @aalto.fi}} }

\date\today

\maketitle

\begin{abstract}
In the origins of complexity theory Booth and Lueker showed that the question of whether two graphs are isomorphic or not can be reduced to the special case of chordal graphs. To prove that, they defined a transformation from graphs $G$ to chordal graphs $BL(G)$. The projective resolutions of the associated edge ideals $I_{BL(G)}$ is manageable and we investigate to what extent their Betti tables also tell non-isomorphic graphs apart. It turns out that the coefficients describing the decompositions of Betti tables into pure diagrams in Boij--S\"oderberg theory are much more explicit than the Betti tables themselves, and they are expressed in terms of classical statistics of the graph $G.$
\end{abstract}

\section{Introduction}

According to the main theorem of Boij--S\"oderberg theory every Betti table can be expressed as a weighted non-negative sum of particularly elementary tables, called pure Betti tables \cite{BS08,ES11,Fl}. These weight coefficients are usually even more cumbersome to express than the Betti numbers themselves, and from explicit calculations they tend to involve many binomial coefficients and alternating signs. In our setting it turns out that they are more straightforward to state than the Betti numbers, and we can provide elementary formulas for them.

In one of the first results of complexity theory, Booth and Lueker \cite{BL75} proved that two graphs $G$ and $G'$ are isomorphic if and only if their corresponding Booth--Lueker graphs $BL(G)$ and $BL(G')$ are isomorphic. This reduced the problem of graph isomorphism to the special class of Booth--Lueker graphs, which have several attractive properties. The main result of this paper, Theorem~\ref{thm:main}, regards the edge ideal of the Booth--Lueker graph of a graph on $n$ vertices and $d_i$ vertices of degree $i$, for $i=0,\dots,n-1$. If $c_j$ is the weight of the pure 2-linear table with $j$ non-zero entries on the second row, then
\[
c_j =  
\frac{d_{j-n+1}}{j}+\frac{1}{j(j+1)} \sum_{i=j-n}^{n-1}d_i
\,\,\,\textrm{ for }\,\,\, n \leq j \leq 2n-2,\,\,\,\,\,\,\,\,\,
c_{n-1}=\frac{d_0}{n},
\]
and the other Boij--S\"oderberg coefficients vanish. Further on, we also describe in Section~\ref{sec:main} the explicit Betti numbers and anti-lecture hall compositions of these ideals.

In Theorem~\ref{thm:mainComp} we study the dual graph situation, constructing the ideal of the complement of the Booth--Lueker graph. We show that if the original graph has $n$ vertices and $m$ edges, then
\[ c_j= \frac m{j(j+1)}\,\,\,\textrm{ for }\,\,\,m\le j\le m+n-4,\,\,\,\,\,\,\,\,\,
c_{m+n-3}=\frac{m}{m+n-3},
\]
and the other coefficients vanish. We also give explicit results on Betti numbers and anti-lecture hall compositions for these ideals in Section~\ref{sec:mainComp}.

\subsubsection*{Acknowledgements} We would like to thank Petteri Kaski for discussions on the graph isomorphism problem.

\section{Tools and background results}

In this section we list some useful tools from graph theory, commutative algebra and combinatorics, without introducing anything new. 

\subsection{Basic definitions and standard tools in graph theory and combinatorics}


All graphs are assumed to be simple, meaning that the edges have no direction and there are no multiple edges or loops. The \emph{degree} of a vertex is the number of edges incident to it. It is common to discuss the degree sequence of a graph, that is, the degrees sorted downwards. We will \emph{not} do that. 
\begin{definition}
The \emph{degree vector} or \emph{degree statistics} of a graph $G$ on $n$ vertices is the column vector
\[ \mathbf{d}_ G:= (d_0, d_1, \ldots, d_{n-1} )^T \]
where $d_i$ is the number of vertices of degree $i$ in $G.$
\end{definition}
\begin{remark}
Actually the word \emph{degree} was brought from algebra to graph theory by Petersen~\cite{P91} in 1891, addressing a problem in invariant theory by Hilbert.
\end{remark}

The complement of a graph $G$ is denoted by $\overline{G}$ and the induced subgraph of $G$ on the set of vertices $W$ by $G[W].$ 


\begin{definition}
A graph $G$ is said to be \emph{chordal} if every cycle of lenght greater than three has a chord.
\end{definition}

\begin{definition}
Given a graph $G$, we denote by $I_G:=(x_ix_j\mid ij\in E(G))$ its \emph{edge ideal}, in a polynomial ring $S:=k[x_1,\dots,x_n]$ with as many variables as the vertices of $G$, where $k$ is a field.
\end{definition}

\begin{remark}
We are interested in the Betti numbers of the edge ideals of some chordal graphs. Corollary 5.10 of \cite{HV} shows that the characteristic of the field $k$ does not affect these Betti numbers.
\end{remark}


The following well-known result is used several times later on, and proved in for example Section~1.2.6.I of~\cite{K73}. The first equality is known as Vandermonde's identity or convolution.

\begin{lemma}\label{littlebinomialformulas}
For all integers $r$, $s$, and $n$, 
\[ \sum_{k}\binom rk\binom s{n-k}=\binom{r+s}n \,\,\,\textrm{ and }\,\,\, \sum_{k}(-1)^{r-k}\binom rk\binom{s+k}n=\binom s{n-r}. \]
\end{lemma}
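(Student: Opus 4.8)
The plan is to prove both identities by the generating-function method in the ring of formal power series $\mathbb{Z}[[x]]$, using the standard conventions for binomial coefficients (as in the cited reference): $\binom ab=a(a-1)\cdots(a-b+1)/b!$ for integers $a$ and non-negative integers $b$, and $\binom ab=0$ for $b<0$. First I would record the only auxiliary facts needed: for every integer $a$ the series $(1+x)^a:=\sum_{b\ge 0}\binom ab x^b$ makes sense (even for negative $a$, since $1+x$ is a unit in $\mathbb{Z}[[x]]$) and satisfies $(1+x)^a(1+x)^{a'}=(1+x)^{a+a'}$, and for $a\ge 0$ the finite expansion $(1+y)^a=\sum_k\binom ak y^k$ holds as a polynomial identity.

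For the first identity I would expand $(1+x)^r(1+x)^s$ in two ways: it equals $(1+x)^{r+s}=\sum_n\binom{r+s}n x^n$, and it also equals the Cauchy product of $\sum_i\binom ri x^i$ with $\sum_j\binom sj x^j$, whose coefficient of $x^n$ is $\sum_k\binom rk\binom s{n-k}$. Comparing coefficients of $x^n$ gives the claim; for $n<0$ both sides vanish. (Alternatively, for $r,s\ge 0$ this is the classical count of the $n$-element subsets of a disjoint union of an $r$-set and an $s$-set, split according to how many elements are chosen from the $r$-set; the general case follows since both sides are polynomials in $r$ and $s$ that agree for all non-negative integer values.)

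For the second identity I would write $\binom{s+k}n$ as the coefficient of $x^n$ in $(1+x)^{s+k}$ and compute, using $(-1)^{r-k}=(-1)^r(-1)^k$ together with the binomial expansion of $(1+y)^r$ (legitimate because $r\ge 0$ makes the $k$-sum finite),
\[
\sum_k(-1)^{r-k}\binom rk(1+x)^{s+k}=(1+x)^s\bigl((1+x)-1\bigr)^r=(1+x)^s x^r .
\]
Extracting the coefficient of $x^n$ from both sides then finishes the proof: on the right it equals the coefficient of $x^{n-r}$ in $(1+x)^s$, namely $\binom s{n-r}$, while on the left it equals $\sum_k(-1)^{r-k}\binom rk\binom{s+k}n$.

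The step I expect to require the most care is not any of these manipulations but the bookkeeping about the conventions for binomial coefficients with negative or out-of-range arguments, and checking that each interchange or truncation of the $k$-sum is legitimate (finiteness when the relevant upper index is a non-negative integer, or interpretation as an identity of polynomials or formal power series otherwise). Once the conventions are fixed at the outset, the two displayed computations above are essentially the whole proof, which is why it is reasonable simply to cite Section~1.2.6 of \cite{K73} for the details.
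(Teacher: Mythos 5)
Your proof is correct, but note that the paper does not actually prove this lemma at all: it simply cites Section~1.2.6 of Knuth \cite{K73}, so any self-contained argument is "different" by default. Your generating-function derivation is a standard and complete way to supply the missing details. Two small points deserve attention. First, when you define $(1+x)^a:=\sum_{b\ge0}\binom ab x^b$ and then assert $(1+x)^a(1+x)^{a'}=(1+x)^{a+a'}$, that multiplicativity of the \emph{series} is essentially Vandermonde itself, so to avoid circularity you should (as your parenthetical about $1+x$ being a unit suggests) define $(1+x)^a$ as the genuine $a$-th power of the unit $1+x$ in $\mathbb{Z}[[x]]$ --- where multiplicativity is automatic --- and separately verify that this power has the claimed coefficients, via the binomial theorem for $a\ge0$ and the negative binomial series $\binom{-m}{b}=(-1)^b\binom{m+b-1}{b}$ for $a=-m<0$; your alternative route (count subsets for $r,s\ge0$, then extend by polynomiality in $r$ and $s$ for fixed $n\ge0$) sidesteps this entirely and is arguably cleaner. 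Second, you correctly observe that the second identity needs $r\ge0$ for the sum to be finite; the lemma as stated in the paper ("for all integers $r$") is slightly too generous on this point, though every application in the paper has $r\ge0$, so your implicit restriction matches the actual use.
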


We will also make use several times of the following combinatorial result. It is also well-known and appears in many sources. For lack of a reference with a complete proof, we include our own.

\begin{lemma}\label{lem:thesumiszero}
Let $P\in \mathbb{Q}[x]$ be a polynomial of degree less than $N$ such that $P(n)\in\mathbb{Z}$ for all $n\in\mathbb{Z}$. Then
\begin{equation}\label{bigzerosum}
\sum_{i=0}^N(-1)^i\binom NiP(i)=0.
\end{equation}
\end{lemma}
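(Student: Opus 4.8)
The plan is to reduce the claim to the classical identity that the $N$-th finite difference of a polynomial of degree less than $N$ vanishes. First I would recall that the forward difference operator $\Delta$ acts on functions by $(\Delta f)(x)=f(x+1)-f(x)$, and that iterating it $N$ times gives
\[
(\Delta^N f)(0)=\sum_{i=0}^N(-1)^{N-i}\binom Ni f(i),
\]
which up to the global sign $(-1)^N$ is exactly the left-hand side of \eqref{bigzerosum}. So it suffices to prove $(\Delta^N P)(0)=0$.

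The key structural fact is that $\Delta$ lowers the degree of a polynomial by exactly one: if $\deg P=k$ then $\Delta P$ is a polynomial of degree $k-1$ (the leading terms cancel, since $(x+1)^k-x^k$ has degree $k-1$). I would prove this by a one-line computation on $P(x)=a_kx^k+\text{lower order}$, or alternatively invoke the basis of binomial coefficient polynomials $\binom xj$, for which $\Delta\binom xj=\binom x{j-1}$; every integer-valued polynomial of degree $<N$ is a $\mathbb{Z}$-linear combination of $\binom x0,\dots,\binom x{N-1}$ (this is the standard Pólya description of integer-valued polynomials), so it is enough to check the claim on each $\binom xj$ with $j<N$. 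Applying $\Delta$ a total of $N$ times to $\binom xj$ yields $\binom x{j-N}$, which is the zero polynomial whenever $j<N$ (the binomial coefficient $\binom{x}{m}$ for negative $m$ being identically $0$ under the polynomial convention $\binom xm=x(x-1)\cdots(x-m+1)/m!$). Hence $\Delta^N P=0$ identically, and in particular its value at $0$ is $0$, giving \eqref{bigzerosum}.

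Carrying this out, the steps in order are: (1) rewrite $\sum_{i=0}^N(-1)^i\binom NiP(i)$ as $(-1)^N(\Delta^N P)(0)$, either by induction on $N$ using the Pascal recursion $\binom Ni=\binom{N-1}{i-1}+\binom{N-1}{i}$ or by directly expanding $\Delta^N=(\text{shift}-\text{id})^N$ via the binomial theorem for operators; (2) establish that $\Delta$ decreases polynomial degree by one; (3) conclude that $\Delta^N$ annihilates every polynomial of degree $<N$, so the value at $0$ vanishes. Note that the integrality hypothesis $P(n)\in\mathbb{Z}$ is not actually needed for the identity to hold over $\mathbb{Q}$ — it is automatically a consequence of working with integer-valued polynomials, but the vanishing itself holds for any $P\in\mathbb{Q}[x]$ of degree $<N$ — so I would either drop that hypothesis in the write-up or remark that it is included only because that is the form in which the lemma is applied later.

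The only mild obstacle is bookkeeping with signs and index ranges in step (1): one must be careful that $\binom Ni$ with $i$ out of range contributes zero and that the alternating sign matches $(-1)^N(\Delta^N P)(0)$ rather than $(\Delta^N P)(0)$ itself. I expect the cleanest route is the operator identity $\Delta^N=\sum_{i=0}^N\binom Ni(-1)^{N-i}E^i$ where $E$ is the unit shift $Ef(x)=f(x+1)$, evaluated at $x=0$; this makes step (1) immediate and isolates all the combinatorial content into the elementary degree-drop fact, which is genuinely routine.
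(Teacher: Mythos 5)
Your proof is correct, but it takes a genuinely different route from the paper's. The paper first expands $P$ in the basis $\binom{x+j}{j}$, $j=0,\dots,N-1$ (citing Lemma~4.1.4 of Bruns--Herzog to get integer coefficients $a_j$), reduces by linearity to the case $P=\binom{x+j}{j}$, and then verifies the identity for these basis polynomials by differentiating the binomial expansion of $(1+x)^N$ a total of $j$ times and setting $x=-1$. You instead recognize the sum as $(-1)^N(\Delta^N P)(0)$ and invoke the fact that the forward difference operator drops degree, so $\Delta^N$ annihilates all polynomials of degree less than $N$. Your argument is self-contained (no external citation needed), isolates the combinatorial content into a single elementary degree-drop observation, and --- as you correctly note --- shows the integrality hypothesis is superfluous: the identity holds for any $P\in\mathbb{Q}[x]$ of degree less than $N$, since the reduction to basis elements (in either proof) only needs $\mathbb{Q}$-linearity. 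The paper's version buys a slightly more computational but equally short verification at the cost of a reference to Bruns--Herzog; in both proofs the key reduction is to a distinguished basis of binomial-coefficient polynomials, yours via $\binom{x}{j}$ and the shift operator, theirs via $\binom{x+j}{j}$ and differentiation.
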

\begin{proof}
By Lemma~4.1.4 in~\cite{BH93}, there are $a_0,\dots,a_{N-1}$ in $\mathbb{Z}$ such that
$$P(x)=\sum_{j=0}^{N-1}a_j\binom{x+j}j.$$ Therefore, it is enough to prove that the statement holds for any polynomial of the form $\binom{x+j}j$, with $j<N$. This follows from differentiating $j$ times the binomial formula $(1+x)^N=\sum_{i=0}^N\binom Nix^i$, and setting $x=-1$.
\end{proof}

\subsection{Booth--Lueker graphs and the graph isomorphism problem}

In this section we introduce our object of study, namely that of the Booth--Luker graph $BL(G)$ associated to a given graph $G$, but before doing so we explain the motivation behind the construction.

A fundamental question in graph theory (with applications primarly in computer science) that goes by the name of ``graph isomorphism problem'' is the following: are two given graphs $G$ and $G'$ isomorphic? It is still an open question whether the isomorphism for general graphs can be determined in polynomial time or not. It is also unknown whether the graph isomorphism problem is NP-complete.  There are some upper bounds: in 1983 Babai and Luks~\cite{BL83} showed that the problem can be solved in moderately exponential $\exp(O(\sqrt{n\log n}))$ time, where $n$ is the number of vertices, and in 2015 Babai claimed that the problem can actually be solved in quasipolynomial time. A mistake in the proof was found by Helfgott and in 2017 Babai re-claimed the quasipolynomial time (see~\cite{Ba}) after dealing with the error. As far as we know, the proof has not yet been fully checked.

However, there are many results known about  special classes of graphs. We are interested in the case of chordal graphs, due to the following polynomial-time reduction result proved by Booth and Lueker.
\begin{theorem}[\cite{BL75}, Theorem 4.6]
Arbitrary graph isomorphism is polynomially reducible to chordal graph isomorphism.
\end{theorem}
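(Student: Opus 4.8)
The plan is to prove the theorem constructively: exhibit an explicit transformation $G\mapsto BL(G)$ that sends an arbitrary graph to a chordal graph, computable in polynomial time, and such that $G\cong G'$ if and only if $BL(G)\cong BL(G')$. Concretely, I would set $V(BL(G)):=V(G)\sqcup E(G)$; turn $V(G)$ into a clique; and for every edge $e=\{u,v\}\in E(G)$ join the new vertex $e$ to exactly $u$ and $v$ (so each original edge becomes a path of length two through a fresh degree-$2$ vertex). Then $|V(BL(G))|=n+m$ and $|E(BL(G))|=\binom n2+2m$, so $BL(G)$ is produced in time polynomial in the size of $G$, and the reduction queries a chordal-graph-isomorphism oracle only once, on the pair $BL(G),BL(G')$.

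First I would check that $BL(G)$ is chordal. Every vertex $e\in E(G)$ has degree exactly $2$ in $BL(G)$, with both neighbours lying in the clique $V(G)$. Take any cycle of length at least four. If it passes through some $e\in E(G)$, its two cycle-neighbours are the endpoints $u,v\in V(G)$ of $e$, which are adjacent because $V(G)$ is a clique, so $uv$ is a chord; if instead the cycle lies entirely inside $V(G)$, any two of its non-consecutive vertices are adjacent, again giving a chord. Hence every long cycle has a chord.

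The forward implication is routine: an isomorphism $\varphi\colon G\to G'$ extends to an isomorphism $BL(G)\to BL(G')$ acting as $\varphi$ on $V(G)$ and by $\{u,v\}\mapsto\{\varphi(u),\varphi(v)\}$ on $E(G)$, and this bijection preserves the clique edges and the incidence edges. The substance is the reverse implication, which I would handle by recovering $G$ from $BL(G)$ in an isomorphism-invariant way. Assume $n\ge 4$; the finitely many graphs on at most three vertices, as well as pairs with $|V(G)|\ne|V(G')|$ or $|E(G)|\ne|E(G')|$, are settled directly before any oracle call. Then every old vertex $v$ has $BL(G)$-degree $(n-1)+\deg_G(v)\ge n-1\ge 3$, while every new vertex has degree $2$, so the splitting of $V(BL(G))$ into old and new vertices is read off from the degree sequence and is therefore preserved by any isomorphism; moreover two old vertices are adjacent in $G$ exactly when they share a degree-$2$ neighbour in $BL(G)$. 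This reconstructs $E(G)$, hence $G$ up to isomorphism, canonically, so $BL(G)\cong BL(G')$ forces $G\cong G'$; combined with chordality and polynomiality this gives the reduction.

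The main obstacle is the reverse direction, and inside it the one genuinely delicate point is making the reconstruction \emph{canonical}: one must separate new from old vertices using only an isomorphism-invariant quantity (here ``degree $=2$'' versus ``degree $\ge 3$''), which is precisely why the hypothesis $n\ge 4$ is needed and why the small cases must be dispatched separately. Everything else — verifying that the extension in the forward direction is a homomorphism, and confirming the degree and size bounds — is bookkeeping.
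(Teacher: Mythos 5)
Your proposal is correct and follows exactly the route the paper takes: it builds the Booth--Lueker graph $BL(G)$ as defined in the paper, verifies chordality, and establishes $G\cong G'\Leftrightarrow BL(G)\cong BL(G')$ by canonically recovering $G$ from $BL(G)$ via the degree split. The paper itself delegates the proof to the citation \cite{BL75}, but your argument (including the degree-based separation of old and new vertices and the dispatch of small cases) is the standard complete proof of that result.
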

To prove this, they made use of the following construction, which we therefore name after them.

\begin{definition}
For any graph $G$ let $BL(G)$ be the graph with vertex set $V(G) \cup E(G)$ and edges $uv$ for every pair of vertices in $G$ and $ue$ for every vertex $u$ incident to an edge $e$ in $G$. We call $BL(G)$ the \emph{Booth--Lueker graph} of $G$.
\end{definition}

Observe that both $BL(G)$ and its complement are chordal, for every $G$. They are split graphs. Thus, we actually have two interesting ideals to define. In Section~\ref{sec:main} we study algebraic invariants of the edge ideals of Booth--Lueker graphs, and in Section~\ref{sec:mainComp} we do the same for the complements.

\begin{remark}
As already stated, for two graphs $G$ and $G'$, having an isomorphism $G\cong G'$ is equivalent to having $BL(G)\cong BL(G')$. So the Booth--Lueker construction provides a reduction of the isomorphism problem from general graphs to the class of chordal graphs, which are particularly tractable thanks to Theorem~\ref{bettibyconncomp}.
\end{remark}

\begin{example}
Consider the path on four vertices. It is depicted in Figure~\ref{fig:BL} with its Booth-Lueker graph.
\end{example}

\begin{figure}
\begin{center}
\begin{tikzpicture} [>=latex]
\fill (-4,0) circle (0.1);
\fill (-4,1) circle (0.1);
\fill (-3,0) circle (0.1);
\fill (-3,1) circle (0.1);
\draw [thick] (-3,1) -- (-3,0);
\draw [thick] (-3,1) -- (-4,1);
\draw [thick] (-4,0) -- (-3,0);
\fill (0,0) circle (0.1);
\fill (0,1) circle (0.1);
\fill (1,0) circle (0.1);
\fill (1,1) circle (0.1);
\fill (2,1.5) circle (0.1);
\fill (2,.5) circle (0.1);
\fill (2,-.5) circle (0.1);
\draw [thick] (0,0) -- (1,0);
\draw [thick] (1,1) -- (0,1);
\draw [thick] (1,1) -- (1,0);
\draw [thick] (0,0) -- (0,1);
\draw [thick] (0,0) -- (1,1);
\draw [thick] (1,0) -- (0,1);
\draw [thick] (2,1.5) -- (1,1);
\draw [thick] (2,1.5) -- (0,1);
\draw [thick] (2,.5) -- (1,1);
\draw [thick] (2,.5) -- (1,0);
\draw [thick] (2,-.5) -- (1,0);
\draw [thick] (2,-.5) -- (0,0);
\coordinate [label=$\mapsto$] (B) at (-1.5,.25);
\coordinate [label=$G$] (G) at (-3.5,-1.03);
\coordinate [label=$BL(G)$] (BLG) at (.5,-1.1);
\end{tikzpicture}
\caption{The path on four vertices and its Booth--Lueker graph.}
\label{fig:BL}
\end{center}
\end{figure}
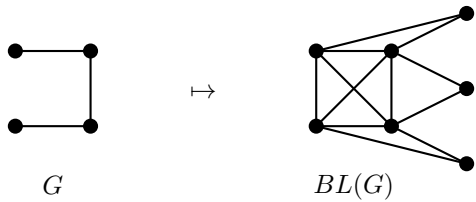

We can think of the Booth--Lueker constructions in the following terms: we take a copy of the original graph $G$ on the left, add on the right new vertices corresponding to the edges of $G$ and connect these vertices with the respective ends of the associated edge of $G$. Then we complete the left part of the graph. Later on we will use again this terminology of the ``left'' and ``right'' part of $BL(G)$.

\subsection{Boij--S\"oderberg theory}

Denote as usual $S=k[x_1,\dots,x_n]$, where $k$ is a field. For a finitely generated graded $S$-module $M$, the $(i,j)$th \emph{(graded) Betti number} is defined as
$$\beta_{i,j}(M):=\dim_k(\mathrm{Tor}^S_i(k,M)_j).$$
It is customary to arrange the Betti numbers of $M$ in the \emph{Betti table} of $M$, which has as $ji$th entry the number $\beta_{i,i+j}(M)$.

Boij--S\"oderberg theory deals with writing the Betti table of a finitely generated graded $S$-module as a sum of simpler pieces, coming from the so-called ``pure Betti tables'': to each sequence $\mathbf n=(n_0,\dots,n_s)$ of strictly increasing non-negative integers, we associate the table $\pi(\mathbf n)$ with entries
$$\pi(\mathbf n)_{i,j}:=
\begin{cases}
\prod_{k\ne0,i}\vert\frac{n_k-n_0}{n_k-n_i}\vert & \text{if }i\ge0,j=n_i,\\
0&\text{otherwise.}
\end{cases}$$
This is called the \emph{pure Betti table} associated to $\mathbf n$. We can moreover give a partial order to such sequences by setting
$$(n_0,\dots,n_s)\ge (m_0,\dots,m_t)$$
whenever $s\le t$ and $n_i\ge m_i$ for all $i\in\{0,\dots,s\}$. 

\begin{remark}
It is customary to refer to a sequence of stricly increasing integers as above as a \emph{degree sequence}. Since the same name also corresponds to a concept in graph theory, we just do not refer to them in order to avoid confusion.
\end{remark}

\begin{theorem}[\cite{Fl}, Theorem 5.1]
For every finitely generated graded $S$-module $M$, there is a stricly increasing chain $\mathbf n_1<\dots<\mathbf n_p$ of stricly increasing sequences of $n+1$ non-negative integers and there are numbers $c_{\mathbf n_1},\dots,c_{\mathbf n_p}\in\mathbb Q_{\ge0}$ such that
$$\beta(M)=c_{\mathbf n_1}\pi(\mathbf n_1)+\dots+c_{\mathbf n_p}\pi(\mathbf n_p).$$
\end{theorem}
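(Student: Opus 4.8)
The approach I would take is the standard greedy ``peeling'' argument, carried out in the rational vector space $V$ of formal Betti tables, i.e.\ finitely supported functions $(i,j)\mapsto D_{i,j}\in\mathbb{Q}$ with $0\le i\le n$. I would rely on two external inputs. First, by Hilbert's syzygy theorem the minimal graded free resolution of $M$ is finite, of length $p=\mathrm{pd}(M)\le n$; for $M\neq 0$ it is supported in homological degrees $0,1,\dots,p$, and minimality forces the least shifts $m_i:=\min\{\,j:\beta_{i,j}(M)\neq 0\,\}$ to satisfy $m_0<m_1<\dots<m_p$, so $\mathbf m:=(m_0,\dots,m_p)$ is a genuine strictly increasing sequence in the sense above. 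Second, I would invoke the existence of pure resolutions (Eisenbud--Fl{\o}ystad--Weyman in characteristic zero, Eisenbud--Schreyer in general): for every such sequence $\mathbf n$, a positive rational multiple of $\pi(\mathbf n)$ is the Betti table of an honest module. Consequently every $\pi(\mathbf n)$, and in particular $\beta(M)$ itself, lies in the \emph{Boij--S\"oderberg cone} $B\subseteq V$ generated by the Betti tables of all finitely generated graded $S$-modules; I would also record the elementary fact that every element of $B$ has column support equal to an interval $\{0,1,\dots,q\}$, since each generator of $B$ does.

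The key step is a single peeling move. Given $0\neq D\in B$ with column support $\{0,\dots,p\}$ and top degree sequence $\mathbf m=(m_0<\dots<m_p)$, where $m_i$ is the least $j$ with $D_{i,j}\neq 0$, I note that $\pi(\mathbf m)$ is supported exactly on the ``corner'' positions $(i,m_i)$ and is strictly positive there, while $D_{i,m_i}>0$ as well. I would then set
\[
c:=\min_{0\le i\le p}\frac{D_{i,m_i}}{\pi(\mathbf m)_{i,m_i}}>0,\qquad D':=D-c\,\pi(\mathbf m),
\]
so that $D'\ge 0$ entrywise and the corners indexed by the nonempty set $A:=\{i:D_{i,m_i}/\pi(\mathbf m)_{i,m_i}=c\}$ vanish in $D'$. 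The one substantial point to verify is that $D'$ is again in the cone $B$. Granting that, the rest is formal: in a column $i\notin A$ the least shift is unchanged, and in a column $i\in A$ it strictly increases (to the next nonzero entry, if any); since members of $B$ have gapless column support starting at $0$, a column of $D'$ can disappear only if it is the last one, so the new top degree sequence $\mathbf m'$ arises from $\mathbf m$ either by strictly raising the entries indexed by $A$ (same length) or by deleting the final entry (shorter length), and in both cases $\mathbf m'>\mathbf m$ in the poset of sequences.

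Then I would iterate, starting from $D_0:=\beta(M)$ and producing $D_0,D_1,D_2,\dots\in B$ with $D_k=D_{k-1}-c_k\,\pi(\mathbf m_k)$, $c_k\in\mathbb{Q}_{>0}$, and top degree sequences $\mathbf m_1<\mathbf m_2<\cdots$ strictly increasing. Each $\mathbf m_k$ has entries bounded below and above by the least and greatest internal degrees occurring in $\beta(M)$, and length at most $n+1$, so there are only finitely many candidates and no infinite strictly increasing chain among them; the process therefore terminates at some $D_N=0$. Summing the steps gives $\beta(M)=c_1\pi(\mathbf m_1)+\dots+c_N\pi(\mathbf m_N)$ with $c_k>0$ and $\mathbf m_1<\dots<\mathbf m_N$, which is the asserted decomposition; reconciling this with the statement's normalization by sequences of $n+1$ integers is a harmless bookkeeping matter.

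The main obstacle is exactly the claim flagged above, that the remainder $D'$ stays inside the cone $B$ rather than merely staying entrywise non-negative: an arbitrary non-negative table with gapless column support need not be a non-negative combination of Betti tables. I expect essentially all the real work to sit here, and I would handle it by combining the Herzog--K\"uhl equations --- which identify $\pi(\mathbf m)$, up to scaling, as the unique pure diagram with degree sequence $\mathbf m$ --- with the existence of pure resolutions, so that the multiple $c\,\pi(\mathbf m)$ can be subtracted while keeping the total in the cone generated by honest Betti tables. Everything else (finiteness of the resolution, strict monotonicity of the chain, termination, non-negativity of the coefficients) is then elementary.
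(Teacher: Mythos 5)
The paper does not actually prove this statement --- it is quoted verbatim from Fl\o ystad's survey (ultimately Eisenbud--Schreyer) and used as a black box --- so there is no internal proof to compare against. Your proposal is the standard greedy peeling algorithm, and its formal skeleton is fine: minimality of the resolution forces $m_0<m_1<\dots<m_p$, each subtraction strictly raises the top degree sequence or shortens it, and termination follows from finiteness of the poset of admissible sequences. The problem is that the step you flag as ``the one substantial point to verify'' is not a deferrable verification; it is essentially the entire content of the theorem, and the tools you propose for it do not reach it.

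Concretely, the existence of pure resolutions together with the Herzog--K\"uhl equations gives only the inclusion $P\subseteq B$, where $P$ is the cone spanned by the pure diagrams $\pi(\mathbf n)$ and $B$ is the cone spanned by Betti tables of actual modules. Neither fact says anything about whether $D'=D-c\,\pi(\mathbf m)$ remains in $B$: subtracting one element of a cone from another generically leaves the cone, and knowing that $c\,\pi(\mathbf m)$ is itself (a multiple of) an honest Betti table is irrelevant to that. What the peeling argument actually requires is the reverse inclusion $B\subseteq P$ together with an exterior, inequality description of $P$. Eisenbud and Schreyer obtain this by pairing Betti tables against cohomology tables of supernatural vector bundles, proving these functionals are non-negative on every Betti table, and showing they cut out the facets of the simplicial fan structure on $P$ indexed by chains of degree sequences; it is precisely this facet description that guarantees the remainder after a peeling step still satisfies all defining inequalities of $P$ and can therefore be peeled again. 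Without that input your argument is circular --- you would be assuming that every table in the cone of Betti tables decomposes non-negatively into pure diagrams, which is the statement to be proved. So the proposal names the right algorithm but leaves the genuinely hard part unproved, and the suggested patch would not fill the gap.
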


\begin{definition}
We refer to  non-negative rational numbers $c_{\mathbf n_1},\dots,c_{\mathbf n_p}$ as in the theorem above as \emph{Boij--S\"oderberg coefficients} of $M$.
\end{definition}

\begin{example}
Let  $I=(x^2,xy,y^3)\subset S=k[x,y]$. Then one can compute that
\begin{align*}
\beta(S/I)&=\left(\begin{array}{ccc}
1&0&0\\
0&2&1\\
0&1&1
\end{array}\right)\\
&=\frac12\left(\begin{array}{ccc}
1&0&0\\
0&3&2\\
0&0&0
\end{array}\right)
+\frac14\left(\begin{array}{ccc}
1&0&0\\
0&2&0\\
0&0&1
\end{array}\right)
+\frac14\left(\begin{array}{ccc}
1&0&0\\
0&0&0\\
0&4&3
\end{array}\right)\\
&=\frac12\pi(0,2,3)+\frac14\pi(0,2,4)+\frac14\pi(0,3,4).
\end{align*}
Hence $\mathbf n_1=(0,2,3)$, $\mathbf n_2=(0,2,4)$, $\mathbf n_3=(0,3,4)$, and the Boij--S\"oderberg coefficients are $c_{\mathbf n_1}=1/2$, $c_{\mathbf n_2}=1/4$, and $c_{\mathbf n_3}=1/4$.
\end{example}

\begin{remark}
In the same way as for the graded Betti numbers, there is a very straightforward algorithm to compute Boij--S\"oderberg coefficients, but there are no general explicit formulas.
\end{remark}

For a more detailed account of Boij--S\"oderberg theory, see for instance the survey~\cite{Fl} by Fl\o ystad. There the notation for a degree sequence is $\mathbf d$ instead of $\mathbf n$, but we already use $\mathbf d_G$ to denote the degree vector of a graph $G$.

As motivated in Section~\ref{sec:linres}, we will mostly be interested in ideals $I$ with \emph{$2$-linear resolutions}, i.e., such that the Betti table of $S/I$ is of the form
$$\beta(S/I)=\left(
\begin{array}{ccccc}
1 & 0 & 0 & \cdots &0\\
0 & \beta_{1,2} & \beta_{2,3} & \cdots & \beta_{p,p+1}\\
\end{array}
\right).$$ By Boij--S\"oderberg theory, such a Betti table will be the weighted average of certain pure tables of the form $\pi(0,2,3,\dots,s,s+1)$. For instance
$$\pi(0,2)=\left(
\begin{array}{cccc}
1 & 0 & 0 & \cdots \\
0 & 1 & 0 & \cdots \\
\end{array}
\right)$$
or
$$\pi(0,2,3)=\left(
\begin{array}{ccccc}
1 & 0 & 0 & 0 & \cdots \\
0 & 3 & 2 & 0 & \cdots \\
\end{array}
\right).$$
Following the discussion around converting Betti diagrams and Boij--S\"oderberg coefficients into each other from the paper~\cite{ES13}, we record these pure diagrams as the two vectors
$$\pi_1=( 1,0 ,\dots )\qquad\text{and}\qquad\pi_2=(3, 2, 0 \dots ),$$
i.e., we denote by $\pi_s$ the second row of $\pi(0,2,3,\dots,s,s+1)$, except for the first entry, which is always zero.
Slightly larger examples are
\[
\begin{array}{rcl}
 \pi_7 & = & (28 , 112 , 210, 224, 140, 48, 7, 0, 0, \dots ),  \\
 \pi_8 & =  & (36, 168, 378, 504, 420, 216, 63, 8, 0, 0, \dots ), \\
 \pi_9 & =  & (45, 240, 630, 1008, 1050, 720, 315, 80, 9, 0, 0, \dots ). 
 \end{array}
 \]
The zeros in the end extend indefinitely, and it will be convenient for us to write a specific amount of them, as it will become clear in the next sections. We also denote from now on $c_s:=c_{\pi_s}$.

\subsection{On 2-linear resolutions}\label{sec:linres}

\begin{definition}
A \emph{$2$-linear resolution} of a graded $S$-module $M$ is a resolution where $\beta_{i,j}(M)=0$ if $j\ne i+1s$. We call an ideal with a $2$-linear resolution a \emph{$2$-linear ideal}.
\end{definition}

The following was proved by Fr\"oberg~\cite{F90} and refined by Dochtermann and Engstr\"om~\cite{DE09}.
\begin{theorem}[\cite{DE09}, Theorem 3.4]\label{bettibyconncomp}
For a simple graph $G$, the edge ideal $I_G$ has $2$-linear resolution if and only if the complement of $G$ is chordal. Moreover, given a graph $G$ whose complement is chordal, the Betti numbers of $S/I_G$ can be computed as follows:
\[
\beta_{i,i+1}(S/I_G) = \sum_{W\subseteq V(G) \atop \#W = i+1} \big( -1 +  \text{the number of connected components of $\overline{G}[W]$} \big).
\]
\end{theorem}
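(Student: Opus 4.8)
The plan is to derive everything from Hochster's formula for the graded Betti numbers of a Stanley--Reisner ring. First I would note that $I_G$ is the Stanley--Reisner ideal of the independence complex $\Delta$ of $G$, whose faces are the independent sets of $G$; equivalently $\Delta$ is the clique complex of $\overline{G}$, so that for every $W\subseteq V(G)$ the induced subcomplex $\Delta[W]$ is the clique complex of $\overline{G}[W]$. Hochster's formula then reads
\[
\beta_{i,j}(S/I_G)=\sum_{\substack{W\subseteq V(G)\\ \#W=j}}\dim_k\widetilde{H}_{j-i-1}\big(\Delta[W];k\big),
\]
an equality of non-negative integers with no cancellation among the summands. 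Specialising to $j=i+1$ turns the relevant homology into $\widetilde{H}_0$, and $\dim_k\widetilde{H}_0(\Delta[W];k)$ is one less than the number of connected components of $\Delta[W]$; since a clique complex has the same connected components as its $1$-skeleton, this equals $-1+(\text{number of connected components of }\overline{G}[W])$. This already proves the displayed formula for $\beta_{i,i+1}(S/I_G)$ for \emph{every} graph $G$. What chordality of $\overline{G}$ adds is that these are the \emph{only} nonzero Betti numbers, i.e.\ that $\widetilde{H}_\ell(\Delta[W];k)=0$ for all $W$ and all $\ell\ge1$.

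The direction ``$\overline{G}$ not chordal $\Rightarrow$ $I_G$ not $2$-linear'' is the easy one: if $\overline{G}$ is not chordal it has an induced cycle $C_k$ with $k\ge4$, and since $C_k$ has no triangles its clique complex is $C_k$ itself, a circle. Taking $W$ to be the $k$ vertices of this cycle, $\widetilde{H}_1(\Delta[W];k)\cong k\neq0$, so Hochster's formula (again, no cancellation) gives $\beta_{k-2,k}(S/I_G)\ge1$; since $k\neq(k-2)+1$, the resolution is not $2$-linear.

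The substantive direction is that the clique complex $X(H)$ of any chordal graph $H$ satisfies $\widetilde{H}_\ell(X(H);k)=0$ for all $\ell\ge1$; this applies to every $\Delta[W]=X(\overline{G}[W])$ because induced subgraphs of chordal graphs are chordal. I would induct on $\#V(H)$, using the classical fact that a chordal graph has a simplicial vertex $v$, that is, one whose neighbourhood $N_H(v)$ is a clique. If $N_H(v)\ne\emptyset$, then in $X(H)$ the star of $v$ is a cone (hence contractible), its intersection with $X(H-v)$ is the link of $v$, which is the clique complex of the complete graph on $N_H(v)$ --- a full simplex, again contractible --- and these two subcomplexes cover $X(H)$; the Mayer--Vietoris sequence then yields $\widetilde{H}_\ell(X(H);k)\cong\widetilde{H}_\ell(X(H-v);k)$ for all $\ell\ge1$. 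If instead $v$ is isolated, $X(H)$ is the disjoint union of $X(H-v)$ with a point, which does not affect homology in degrees $\ge1$. In either case $H-v$ is again chordal with one vertex fewer, so the induction closes, the one-vertex base case being trivial. Together with the first paragraph this establishes both the equivalence and the formula.

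I expect the only real friction to be bookkeeping around degenerate cases --- $W=\emptyset$, one-element $W$, isolated simplicial vertices --- in Hochster's formula and in the Mayer--Vietoris step, and making sure the ``no cancellation'' feature of Hochster's formula is used so that a single nonvanishing $\widetilde{H}_1$ of an induced subcomplex genuinely forces a nonvanishing Betti number. The conceptual core --- clique complexes of chordal graphs are homologically trivial above degree $0$, which recovers Fr\"oberg's result --- is routine simplicial topology once the simplicial-vertex (perfect elimination) characterisation of chordality is available.
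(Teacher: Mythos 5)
This theorem is quoted in the paper purely as background (it is Fr\"oberg's theorem together with the explicit Betti-number formula of Dochtermann--Engstr\"om, cited from \cite{DE09} and \cite{F90}), and the paper gives no proof of it, so there is nothing in-paper to compare against. Your argument is correct and is essentially the standard route to this result: Hochster's formula applied to the independence complex of $G$ (equivalently the clique complex of $\overline{G}$) identifies the linear strand with $\widetilde{H}_0$ of induced subcomplexes, which immediately yields the displayed formula for every $G$; an induced $C_k$, $k\ge 4$, in $\overline{G}$ produces a nonvanishing $\widetilde{H}_1$ and hence a nonlinear syzygy; and for chordal $\overline{G}$ the vanishing of all higher reduced homology of clique complexes of induced subgraphs follows by peeling off a simplicial vertex with Mayer--Vietoris. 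The degenerate cases you flag ($W=\emptyset$, $\widetilde{H}_{-1}$, isolated vertices) all resolve as you expect, since every singleton is a face of the independence complex, so no induced subcomplex on a nonempty $W$ is void or empty.
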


Engstr\"om and Stamps showed in~\cite{ES13} how Betti tables and Boij--S\"oderberg coefficients are related for 2-linear resolutions. 
We summarise this in the following lemma.
First of all, let us introduce the following notation: if a graded $S$-module $M$ has Betti table
$$\beta(M)=\left(\begin{array}{ccccc}
m & 0 & 0 & \cdots &0\\
0 & \beta_{1,2} & \beta_{2,3} & \cdots & \beta_{p,p+1}\\
\end{array}\right),$$
we denote by $\omega(M)=(\beta_{1,2},\dots,\beta_{p,p+1})$ the \emph{Betti vector} of $M$.

\begin{lemma}[\cite{ES13}, Lemma~3.1 and Theorem~3.2]\label{lem:BSfromES}
Let $\Omega$ be the square matrix  of size $n+m-1$ whose $ij$-entry is $j\binom{i+1}{j+1}$. Then $\Omega$ is invertible and the inverse $\Omega^{-1}$ has $ij$-entry $(-1)^{i-j}\frac1i\binom{i+1}{j+1}$.
Furthermore, if $M$ has a Betti table as above and if  $c=(c_1,\dots,c_{n+m-1})$ is the vector with the Boij--S\"oderberg coefficients of $M$, then we have 
\[
c=\omega(M)\Omega^{-1}.
\]
\end{lemma}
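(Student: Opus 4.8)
The plan is to split the statement into three parts: an explicit closed form for the pure diagrams $\pi_s$, the verification of the inverse formula, and the translation of the Boij--S\"oderberg decomposition into the matrix equation $c=\omega(M)\Omega^{-1}$.

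First I would compute $\pi_s$ directly from the definition of the pure table. For $\mathbf n=(0,2,3,\dots,s,s+1)$ one has $n_0=0$ and $n_t=t+1$ for $1\le t\le s$, so for $1\le i\le s$ the entry on the linear strand is
\[
\pi(\mathbf n)_{i,i+1}=\prod_{\substack{k=1\\k\ne i}}^{s}\frac{k+1}{|k-i|}=\frac{(s+1)!/(i+1)}{(i-1)!\,(s-i)!}=i\binom{s+1}{i+1},
\]
since the numerator is $\prod_{k=1}^{s}(k+1)$ with the single factor $i+1$ removed and the denominator factors as $(i-1)!$ for $k<i$ times $(s-i)!$ for $k>i$. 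Hence $\pi_s=\bigl(\,j\binom{s+1}{j+1}\,\bigr)_{j=1}^{s}$ padded by zeros, which is exactly the $s$th row of $\Omega$. In particular $\Omega$ is lower triangular with $s$th diagonal entry $s\binom{s+1}{s+1}=s$ for $s=1,\dots,n+m-1$, so it is invertible. (A quick check against the listed examples $\pi_2=(3,2,0,\dots)$ and $\pi_7=(28,112,\dots,7,0,\dots)$ confirms the formula.)

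To identify the inverse, I would verify $\Omega\,\Omega^{-1}=I$ entrywise. With the proposed inverse, the $(i,k)$ entry of the product is
\[
\sum_{j} j\binom{i+1}{j+1}\,(-1)^{j-k}\frac1j\binom{j+1}{k+1}=\sum_{j}(-1)^{j-k}\binom{i+1}{j+1}\binom{j+1}{k+1},
\]
and the subset-of-a-subset identity $\binom{i+1}{j+1}\binom{j+1}{k+1}=\binom{i+1}{k+1}\binom{i-k}{j-k}$ turns this into $\binom{i+1}{k+1}\sum_{\ell\ge0}(-1)^{\ell}\binom{i-k}{\ell}$, which equals $\binom{i+1}{k+1}$ when $i=k$ and $0$ when $i>k$ (and the factor $\binom{i+1}{k+1}$ vanishes when $i<k$). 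So the product is the identity; the alternating binomial sum used here is the $(1-1)^{i-k}$ evaluation, also a special case of Lemma~\ref{lem:thesumiszero}. All the index shifts stay inside the range $1\le j\le n+m-1$ because the binomial factors force $k\le j\le i$.

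Finally I would invoke Boij--S\"oderberg theory. By the decomposition theorem, $\beta(M)=\sum_t c_{\mathbf n_t}\pi(\mathbf n_t)$ with non-negative coefficients, and each $\pi(\mathbf n_t)$ has a single nonzero entry in each of its columns. Since $M$ has a $2$-linear resolution, $\beta(M)$ is supported only at the $(0,0)$ position and on the linear strand $\beta_{i,i+1}$; because there is no cancellation among non-negative summands, the support of each $\pi(\mathbf n_t)$ lies in these positions, which forces $n_0=0$, then $n_1=2$ (it cannot be $1$ since $\beta_{1,1}(M)=0$, nor $\ge 3$ by the support restriction), and inductively $n_i=i+1$. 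Thus only the diagrams $\pi(0,2,\dots,s,s+1)$ appear, and reading off the linear strand gives $\omega(M)=\sum_{s=1}^{n+m-1}c_s\pi_s=c\,\Omega$, hence $c=\omega(M)\Omega^{-1}$. The step I expect to need the most care is this last one --- establishing that the decomposition uses \emph{only} these pure diagrams and that the size conventions (both $\omega(M)$ and $c$ of length $n+m-1$) are consistent --- whereas the binomial bookkeeping in the first two parts is routine.
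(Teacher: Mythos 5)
Your proof is correct. Note that the paper itself gives no argument for this lemma --- it is quoted verbatim from Engstr\"om--Stamps \cite{ES13} (Lemma~3.1 and Theorem~3.2) --- and your reconstruction is essentially the standard one from that source: the closed form $(\pi_s)_j=j\binom{s+1}{j+1}$ identifies the rows of $\Omega$ and shows it is lower triangular with nonzero diagonal, the subset-of-a-subset identity plus the alternating sum $(1-1)^{i-k}$ verifies the stated inverse, and the non-negativity of the Boij--S\"oderberg coefficients forces every pure diagram in the decomposition of a $2$-linear module to be of the form $\pi(0,2,3,\dots,s+1)$, giving $\omega(M)=c\,\Omega$. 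The only point worth being slightly more careful about is the one you already flag: the uniform truncation of all vectors and of $\Omega$ to size $n+m-1$, which is justified by the bound on the projective dimension.
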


\subsubsection{Anti-lecture hall compositions}\label{sec:antilecintro}

\begin{definition}
A sequence of integers $\lambda=(\lambda_1,\lambda_2,\dots,\lambda_n)$ such that
$$t\ge\frac{\lambda_1}1\ge\frac{\lambda_2}2\ge\dots\ge\frac{\lambda_n}n\ge0$$
is called an \emph{anti-lecture hall composition} of length $n$ bounded above by $t$. 
\end{definition}

For further information about anti-lecture hall compositions, see~\cite{CS03}, where they were introduced, and also~\cite{S16}.

To a 2-linear ideal (equivalently, a graph with chordal complement) we can associate a unique anti-lecture hall composition with $t=1$ and $\lambda_1=1$, see Section~$4$ of~\cite{ES13}.

\begin{lemma}[\cite{ES13}, Proposition~4.11]\label{ESantilecfrombetti}
Let $I$ be a $2$-linear ideal, denote by $\lambda=(\lambda_1,\dots,\lambda_{m-1})$ the anti-lecture hall composition associated to $I$, $\omega(S/I)$ the Betti vector of $S/I$ and $\Psi$ the invertible $m\times m$ matrix with $ij$-entry $\Psi_{ij}=\binom{i-1}{j-1}$. Then we have $$\lambda=\omega(S/I)\Psi^{-1}.$$
\end{lemma}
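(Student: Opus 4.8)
The plan is to route everything through the Boij--S\"oderberg decomposition of $\beta(S/I)$ together with the change-of-basis matrix $\Omega$ from Lemma~\ref{lem:BSfromES}, so that the lemma reduces to a single binomial identity.

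First I would fix notation: write the Boij--S\"oderberg decomposition $\beta(S/I)=\sum_{s}c_s\,\pi(0,2,3,\dots,s,s+1)$, with non-negative coefficients $c=(c_1,c_2,\dots)$, and observe that comparing $(0,0)$-entries forces $\sum_s c_s=\beta_{0,0}(S/I)=1$. Next I would recall from Section~4 of~\cite{ES13} the description of the associated anti-lecture hall composition directly in terms of these coefficients:
\[
\frac{\lambda_j}{j}=\sum_{s\ge j}c_s\qquad\text{for every }j.
\]
This is precisely what makes $\lambda$ an anti-lecture hall composition, since the right-hand side is non-increasing in $j$ and equals $\sum_s c_s=1=t$ at $j=1$ (which also explains the normalization $\lambda_1=1$). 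Writing $T$ for the lower-triangular matrix with $T_{sj}=j$ for $j\le s$ and $T_{sj}=0$ otherwise, the displayed identity says $\lambda=cT$. On the other hand, inverting the relation $c=\omega(S/I)\Omega^{-1}$ of Lemma~\ref{lem:BSfromES} gives $\omega(S/I)=c\,\Omega$ with $\Omega_{si}=i\binom{s+1}{i+1}$. Since $\Psi$ is lower-triangular with $1$'s on the diagonal it is invertible, and $\lambda=\omega(S/I)\,\Psi^{-1}$ will follow once we show $\Omega\,\Psi^{-1}=T$, i.e.\ $T\,\Psi=\Omega$.

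This last identity is the only real computation, and it is short. Using $\Psi_{ji}=\binom{j-1}{i-1}$, the absorption identity $j\binom{j-1}{i-1}=i\binom{j}{i}$, and the hockey-stick identity (with the terms $j<i$ vanishing),
\[
(T\Psi)_{si}=\sum_{j=1}^{s}j\binom{j-1}{i-1}=i\sum_{j=1}^{s}\binom{j}{i}=i\binom{s+1}{i+1}=\Omega_{si}.
\]
Hence $\lambda=cT=c\,\Omega\,\Psi^{-1}=\omega(S/I)\,\Psi^{-1}$, as claimed; one can moreover check, e.g.\ via the second identity of Lemma~\ref{littlebinomialformulas}, that $\Psi^{-1}$ has $ij$-entry $(-1)^{i-j}\binom{i-1}{j-1}$ (the signed Pascal matrix), which in particular re-proves that the $\lambda_j$ are integers.

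The main obstacle is not the algebra above but the bridge $\lambda=cT$: one has to be sure the anti-lecture hall composition as \emph{defined} in~\cite{ES13} really satisfies $\lambda_j/j=\sum_{s\ge j}c_s$. If~\cite{ES13} introduces $\lambda$ through a more combinatorial construction rather than through the $c_s$ (for instance by reading it off from a ``compressed'' minimal representative of the Betti table), then the first task is to unwind that construction and identify it with the tail sums of the Boij--S\"oderberg coefficients; after that, the displayed binomial identity finishes the proof. A more self-contained alternative would be to avoid $\Omega$ and the $c_s$ altogether and prove $\omega(S/I)=\lambda\Psi$ directly, i.e.\ $\beta_{i,i+1}(S/I)=\sum_j\binom{i-1}{j-1}\lambda_j$, using the subset-counting formula of Theorem~\ref{bettibyconncomp}; but the resulting sums over vertex subsets are more cumbersome to manage than the coefficient computation, so I would keep the Boij--S\"oderberg route as the main line of argument.
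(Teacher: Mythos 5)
The paper does not actually prove this lemma---it is imported verbatim from \cite{ES13} (Proposition~4.11)---so there is no internal proof to compare against. Judged on its own terms, your argument is correct and clean: the identity $(T\Psi)_{si}=\sum_{j\le s}j\binom{j-1}{i-1}=i\binom{s+1}{i+1}=\Omega_{si}$ checks out (absorption plus hockey-stick, with the $j<i$ terms vanishing), $\Psi$ is unitriangular hence invertible, and combining $\omega(S/I)=c\,\Omega$ from Lemma~\ref{lem:BSfromES} with $\lambda=cT$ does yield $\lambda=\omega(S/I)\Psi^{-1}$. Your bridge identity $\lambda_j/j=\sum_{s\ge j}c_s$ is moreover consistent with the paper's own data: in Example~\ref{antilectureexample} one has $c_7=\tfrac18$, $c_8=\tfrac{47}{72}$, $c_9=\tfrac29$, and indeed $\lambda_8/8=\tfrac{47}{72}+\tfrac{2}{9}=\tfrac78$ and $\lambda_9/9=\tfrac29$, matching $\lambda_8=7$, $\lambda_9=2$.

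The one substantive caveat is the one you already flag: given Lemma~\ref{lem:BSfromES}, the relation $\lambda_j=j\sum_{s\ge j}c_s$ is \emph{equivalent} to the statement being proved, so whether your write-up is a proof or a reformulation depends entirely on how \cite{ES13} defines the anti-lecture hall composition associated to a $2$-linear ideal. In \cite{ES13} the composition is attached to a threshold graph through its construction sequence (each new vertex added as isolated or dominating), not directly through the Boij--S\"oderberg coefficients, so the identification of that combinatorial datum with the tail sums of the $c_s$ is the actual content of Proposition~4.11 and would still need to be supplied. Your fallback suggestion---proving $\beta_{i,i+1}(S/I)=\sum_j\binom{i-1}{j-1}\lambda_j$ directly from Theorem~\ref{bettibyconncomp} for threshold graphs---is the route that would close this gap without circularity. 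As a presentation within this paper, simply citing \cite{ES13} as the authors do is the intended treatment; your computation is a useful consistency check but should not be passed off as a self-contained proof until the definitional bridge is established.
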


\section{The Boij--S\"oderberg theory of ideals of Booth--Lueker graphs}\label{sec:main}

In this section we determine the Betti tables, the Boij--S\"oderberg coefficients, and the anti-lecture hall compositions of the edge ideals of Booth--Lueker graphs. It turns out that the information carried over to the algebraic setting from the graphs are all compiled in its degree vector. We will employ several results by Engstr\"om and Stamps~\cite{ES13} on 2-linear resolutions and Boij--S\"oderberg theory to reach these goals.

\subsection{Degree vector and Betti numbers}

For an $S$-module $S/I$ with $2$-linear minimal resolution of length $n$, we denote the non-trivial part of its Betti table as
$$\omega(S/I):=(\beta_{1,2},\beta_{2,3},\dots,\beta_{n,n+1})$$
and call it the \emph{reduced Betti vector} of $S/I$. If the ideal $I$ is the edge ideal of a graph $G$, we just write $\omega(G)$.

\begin{proposition}[From the degree vector to the Betti numbers]\label{niceidea}
Let $G$ be a graph on $n$ vertices and $m$ edges, let $A$ be the matrix of size $(n+m-1)\times n$ defined by
$ A_{ij}=\binom{j+n-2}i, $ and let $v$ be the column $(n+m-1)$-vector defined by $v_i = \binom n{i+1}$. Then
\begin{equation}\label{bellapensata}
\omega(BL(G))=A\mathbf{d}_G-v,
\end{equation}
where $\mathbf{d}_{G}= (d_0, d_1, \ldots, d_{n-1} )^T$ is the degree vector of $G$.
\end{proposition}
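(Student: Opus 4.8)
The plan is to compute the Betti numbers $\beta_{i,i+1}(S/I_{BL(G)})$ directly from the combinatorial formula in Theorem~\ref{bettibyconncomp} (which applies because the complement of $BL(G)$ is chordal), and then to massage the resulting expression into the claimed matrix form by means of Vandermonde's identity from Lemma~\ref{littlebinomialformulas}. The first step is to record the structure of $\overline{BL(G)}$ on the vertex set $V(G)\sqcup E(G)$: the original vertices $V(G)$ form an independent set (since $BL(G)[V(G)]=K_n$), the edge-vertices $E(G)$ form a clique (since no two edge-vertices are adjacent in $BL(G)$), and an original vertex $u$ is adjacent in $\overline{BL(G)}$ to an edge-vertex $e$ exactly when $u$ is not an endpoint of $e$.

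Next I would count connected components of $\overline{BL(G)}[W]$ for $W=W_V\sqcup W_E$ with $W_V\subseteq V(G)$, $W_E\subseteq E(G)$, $\#W=i+1$. If $W_E=\emptyset$, the induced graph is an independent set on $i+1$ vertices, so it contributes $i$ to the sum, and there are $\binom{n}{i+1}$ such $W$. If $W_E\neq\emptyset$, the clique $W_E$ lies in a single component; a vertex $u\in W_V$ sits in its own singleton component exactly when $u$ is an endpoint of every edge in $W_E$, and otherwise joins the big component. Hence the contribution $c-1$ equals $\#\bigl(W_V\cap\bigcap_{e\in W_E}e\bigr)$ — an intersection of size at most one once $\#W_E\geq 2$, since two distinct edges meet in at most one vertex, but possibly of size two when $\#W_E=1$.

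Now I would sum over all $W$. The $W_E=\emptyset$ part contributes $i\binom{n}{i+1}$. For $W_E$ of fixed size $\ell\geq 1$, summing $\#\bigl(W_V\cap\bigcap_{e\in W_E}e\bigr)$ over $(i+1-\ell)$-subsets $W_V$ and over $\ell$-subsets $W_E$ is the same as counting triples consisting of a vertex $w$, a choice of $\ell$ edges all incident to $w$, and an $(i+1-\ell)$-subset of $V(G)$ containing $w$, which equals $\binom{n-1}{i-\ell}\sum_{w}\binom{\deg w}{\ell}=\binom{n-1}{i-\ell}\sum_{k}d_k\binom{k}{\ell}$. Summing over $\ell\geq 1$ and applying Vandermonde's identity in the form $\sum_{\ell\geq 0}\binom{k}{\ell}\binom{n-1}{i-\ell}=\binom{k+n-1}{i}$ gives
\[
\beta_{i,i+1}(S/I_{BL(G)})=i\binom{n}{i+1}+\sum_{k}d_k\binom{k+n-1}{i}-\Bigl(\sum_{k}d_k\Bigr)\binom{n-1}{i}.
\]
Since $\sum_k d_k=n$ and $n\binom{n-1}{i}=(i+1)\binom{n}{i+1}$, the first and last terms combine to $-\binom{n}{i+1}$, leaving $\beta_{i,i+1}=\sum_k d_k\binom{k+n-1}{i}-\binom{n}{i+1}$. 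Matching the $j$th column of $A$ with $d_{j-1}$, this is precisely the $i$th entry of $A\mathbf{d}_G-v$, which is what we want.

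The step I expect to be the main obstacle is the component bookkeeping when $W_E\neq\emptyset$: one must see clearly that a vertex of $W_V$ failing to join the big component forces \emph{all} edges of $W_E$ to pass through it, so there is at most one such vertex once $\#W_E\geq 2$, whereas the single-edge case $\#W_E=1$ genuinely contributes both endpoints. This discrepancy is exactly what the uniform identity $\sum_{\ell\geq 1}$ together with the Vandermonde convolution absorbs; everything after that is routine binomial manipulation.
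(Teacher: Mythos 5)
Your proof is correct and follows essentially the same route as the paper: apply Theorem~\ref{bettibyconncomp} to $\overline{BL(G)}$ (independent left part, clique of edge-vertices, $u$ adjacent to $e$ iff $u\notin e$), reduce the component count to the statistic $\#\bigl(W_V\cap\bigcap_{e\in W_E}e\bigr)$, organize the sum by $\#W_E=\ell$ to get $i\binom{n}{i+1}+\sum_{\ell\ge1}\binom{n-1}{i-\ell}\sum_k d_k\binom{k}{\ell}$, and finish with Vandermonde — which is exactly the paper's intermediate expression (the paper merely splits off the $\ell=1$ term as $2m\binom{n-1}{i-1}$ before recombining). Your component bookkeeping is if anything slightly more explicit than the paper's.
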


\begin{proof}
On the right-hand side we have $$(A\mathbf{d}_G-v)_i=\sum_{j=1}^n\binom{j+n-2}id_{j-1}-\binom n{i+1}.$$ We want to use the formula in Theorem~\ref{bettibyconncomp}. Let $W$ be a set of $i+1$ vertices. If all of $W$ is in the left (independent) part of $\overline{BL(G)}$, then the induced subgraph has $i+1$ connected components. If one of the vertices of $W$ is in the right part of $\overline{BL(G)}$, then this vertex is connected in $\overline{BL(G)}$ to $n-2$ of the $n$ vertices on the left. So actually if $W$ has some vertices in the right part there are not many possibilities for the number of connected components of the induced subgraph: they can either be one, two or three. By applying the formula we find that
\begin{align*}
\beta_{i,i+1} & =i\binom n{i+1}+2\binom{n-2}{i-2}m+2\binom{n-2}{i-1}m+\sum_{k=2}^i\sum_{j=1}^nd_{j-1}\binom{j-1}k\binom{n-1}{i-k}\\
&=i\binom n{i+1}+\sum_{k=1}^i\sum_{j=1}^nd_{j-1}\binom{j-1}k\binom{n-1}{i-k},
\end{align*}
where we used that $\sum_{j=1}^n(j-1)d_{j-1}=2m$. So we want to prove that
$$(i+1)\binom n{i+1}+\sum_{j=1}^nd_{j-1}\sum_{k=1}^i\binom{j-1}k\binom{n-1}{i-k}=\sum_{j=1}^n\binom{j+n-2}id_{j-1},$$
which can be rewritten as
$$\sum_{j=1}^nd_{j-1}\sum_{k=0}^i\binom{j-1}k\binom{n-1}{i-k}=\sum_{j=1}^n\binom{j+n-2}id_{j-1}$$
since $(i+1)\binom n{i+1}=n\binom{n-1}i$. We can now conclude due to the special case of Vandermonde's identity  (see Lemma~\ref{littlebinomialformulas})
$$\sum_{k=0}^i\binom{j-1}k\binom{n-1}{i-k}=\binom{j+n-2}i.$$
\end{proof}

\begin{figure}
\begin{center}
\includegraphics[width=3cm
]{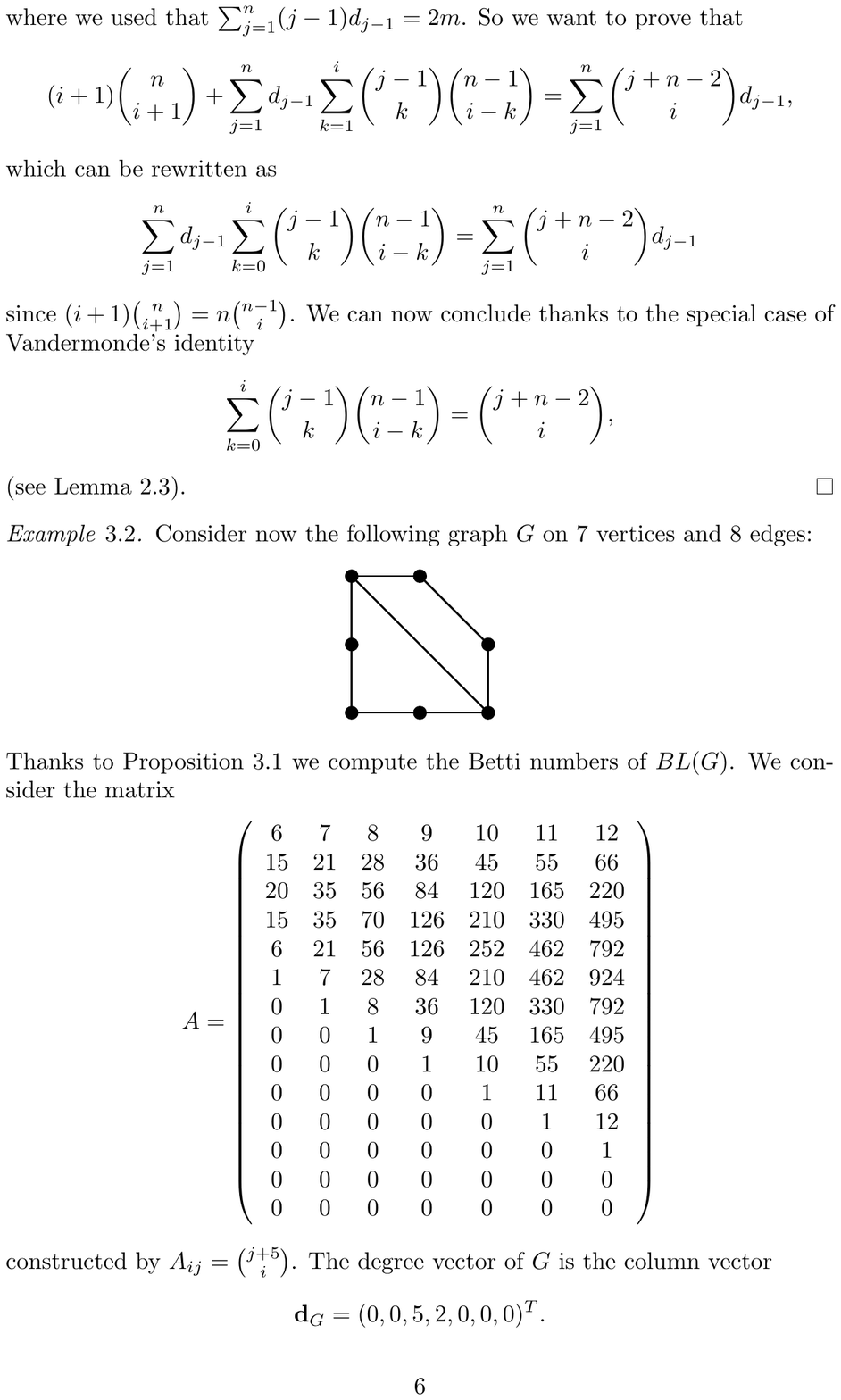}
\caption{The graph $G$ in Example~\ref{examplebn}.}
\label{fig:ge}
\end{center}
\end{figure}

\begin{example}\label{examplebn}
Consider the graph $G$ on 7 vertices and 8 edges depicted in Figure~\ref{fig:ge}. Due to Proposition~\ref{niceidea} we compute the Betti numbers of $BL(G)$. We consider the matrix
\[
A=
\left(
\begin{array}{ccccccc}
6 & 7 & 8 & 9 & 10 & 11 & 12 \\
15 & 21 & 28 & 36 & 45 & 55 & 66 \\
20 & 35 & 56 & 84 & 120 & 165 & 220 \\
15 & 35 & 70 & 126 & 210 & 330 & 495 \\
6 & 21 & 56 & 126 & 252 & 462 & 792 \\
1 & 7 & 28 & 84 & 210 & 462 & 924 \\
0 & 1 & 8 & 36 & 120 & 330 & 792 \\
0 & 0 & 1 & 9 & 45 & 165 & 495 \\
0 & 0 & 0 & 1 & 10 & 55 & 220 \\
0 & 0 & 0 & 0 & 1 & 11 & 66 \\
0 & 0 & 0 & 0 & 0 & 1 & 12 \\
0 & 0 & 0 & 0 & 0 & 0 & 1 \\
0 & 0 & 0 & 0 & 0 & 0 & 0 \\
0 & 0 & 0 & 0 & 0 & 0 & 0 \\ 
\end{array}
\right)
\]
constructed by $A_{ij} = {j+5 \choose i }$. The degree vector of $G$ is the column vector
\[ \mathbf{d}_{G} = ( 0,0,5,2,0,0,0)^T.\] Then
\[ A\mathbf{d}_{G} = 
(58, 212 , 448 , 602 , 532 , 308 , 112 , 23 , 2 , 0 , 0, 0, 0, 0)^T.
\]
Now let
\[ v = (
21 , 35 , 35 , 21 , 7 , 1 , 0 , 0 , 0 , 0 , 0 , 0 , 0, 0
)^T,
\]
where $v_i = {7 \choose i+1}$ for $i=1,2,\ldots, 14.$ Then the Betti vector is
\begin{align*}
\omega(BL(G))&=A\mathbf{d}_G-v\\
& = ( 37 , 177 , 413 , 581 , 525 , 307 , 112 , 23 , 2 , 0 , 0, 0 , 0 , 0)^T
\end{align*}
and the Betti table is
$$\beta(BL(G))=\left(
\begin{array}{ccccccccccccccc}
1&0&0&0&0&0&0&0&0&0&0&0&0&0&0\\
0&37&177&413&581&525&307&112&23&2&0&0&0&0&0
\end{array}\right).$$
\end{example}

\begin{remark}
The reason for the extra zeros comes from Hilbert's Syzygy Theorem, giving a bound on the projective dimension: we simply write all the possibly non-zero entries.
\end{remark}

\begin{proposition}[From the Betti numbers to the degree vector]\label{prop:bntodv}
Let $\Delta(G)$ be the largest vertex degree in $G$. Let $A$ be as in Proposition \ref{niceidea} and let $B$ be the square submatrix of $A$ obtained by taking the first $\Delta(G)+1$ columns and the rows from $n-1$ to $n+\Delta(G)-1$. Then we have $(B^{-1})_{ij}=(-1)^{i+j}B_{ij}$ and
\[
\mathbf{d}_{G}=B^{-1}(\beta_{n-1,n}+1,\beta_{n,n+1},\dots,\beta_{n+\Delta(G)-1,n+\Delta(G)}).
\]
That is, we can compute the degree vector in terms of the (last non-zero) Betti numbers.
\end{proposition}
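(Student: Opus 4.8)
The plan is to isolate, inside the matrix identity of Proposition~\ref{niceidea}, the block of equations indexed by the ``tail'' rows $n-1,n,\dots,n+\Delta(G)-1$, and then to check that the resulting coefficient matrix $B$ is invertible with the asserted inverse. Write $\Delta:=\Delta(G)$ and reindex the rows and columns of $B$ by $a,b\in\{0,1,\dots,\Delta\}$, with $a$ corresponding to row $n-1+a$ of $A$ and $b$ to column $b+1$ of $A$. Using $\binom{M}{K}=\binom{M}{M-K}$ this gives
\[
B_{ab}=A_{\,n-1+a,\;b+1}=\binom{n-1+b}{\,n-1+a\,}=\binom{n-1+b}{\,b-a\,},
\]
which vanishes whenever $b<a$. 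Hence $B$ is upper triangular with all diagonal entries $B_{aa}=\binom{n-1+a}{0}=1$; in particular it is invertible with determinant $1$. (Since $(-1)^{i+j}$ is unchanged under a common shift of $i$ and $j$, the claim $(B^{-1})_{ij}=(-1)^{i+j}B_{ij}$ does not depend on which indexing convention we use.)

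Next I would verify the closed form of $B^{-1}$ directly. Set $\widetilde B_{ab}:=(-1)^{a+b}B_{ab}$ and compute
\[
(B\widetilde B)_{ab}=\sum_{c}(-1)^{c+b}B_{ac}B_{cb}
=\sum_{c}(-1)^{c+b}\binom{n-1+c}{\,n-1+a\,}\binom{n-1+b}{\,n-1+c\,}.
\]
Applying the ``subset of a subset'' identity $\binom{M}{L}\binom{L}{K}=\binom{M}{K}\binom{M-K}{L-K}$ with $M=n-1+b$, $L=n-1+c$, $K=n-1+a$ pulls out the factor $\binom{n-1+b}{n-1+a}$ and leaves $\sum_{c}(-1)^{c+b}\binom{b-a}{c-a}$; writing $c=a+t$ this is $(-1)^{a+b}\sum_{t}(-1)^{t}\binom{b-a}{t}$, which by the binomial theorem equals $1$ if $b=a$ and $0$ otherwise. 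Hence $(B\widetilde B)_{ab}=\delta_{ab}$, so $B^{-1}=\widetilde B$, i.e.\ $(B^{-1})_{ij}=(-1)^{i+j}B_{ij}$. Alternatively, one can note that $B$ is a principal submatrix, on the contiguous index block $\{n-1,\dots,n-1+\Delta\}$, of the infinite upper-triangular Pascal matrix, and that by the block-triangular formula for inverses the inverse of such a submatrix is the corresponding submatrix of the Pascal matrix's inverse, whose entries are $(-1)^{i+j}\binom{j}{i}$.

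It remains to read off $\mathbf d_G$. In the identity $\omega(BL(G))=A\mathbf d_G-v$ of Proposition~\ref{niceidea} one has $v_i=\binom{n}{i+1}$, so $v_{n-1}=1$ and $v_i=0$ for $i\ge n$; moreover $G$ has no vertex of degree larger than $\Delta$, so $d_{j-1}=0$ for $j>\Delta+1$ and only the first $\Delta+1$ columns of $A$ contribute to $A\mathbf d_G$. Restricting \eqref{bellapensata} to the rows $n-1,n,\dots,n+\Delta-1$ therefore yields
\[
B\,\mathbf d_G=\bigl(\beta_{n-1,n}+1,\ \beta_{n,n+1},\ \dots,\ \beta_{n+\Delta-1,\,n+\Delta}\bigr)^{T},
\]
where on the left $\mathbf d_G$ is read as the column of its first $\Delta+1$ entries (the remaining ones being zero), and for these rows to exist in $A$ one checks $n+\Delta-1\le n+m-1$, which holds since a vertex of maximal degree is incident to $\Delta$ distinct edges, so $\Delta\le m$. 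Multiplying on the left by $B^{-1}$ gives the stated formula. The only step that is not pure bookkeeping is the binomial identity establishing the form of $B^{-1}$, and even that is routine once one observes the triangular Pascal-type structure of $B$, so I do not anticipate a genuine obstacle.
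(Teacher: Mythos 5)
Your proposal is correct and follows essentially the same route as the paper: restrict the identity $\omega(BL(G))=A\mathbf d_G-v$ to the rows $n-1,\dots,n+\Delta(G)-1$ (where $v$ contributes only the $+1$ in position $n-1$), observe that $B$ is unitriangular, and verify the explicit inverse by reducing the product $B\widetilde B$ to the alternating sum $\sum_t(-1)^t\binom{b-a}{t}=\delta_{ab}$. The only cosmetic difference is that you reach that alternating sum via the trinomial-revision identity while the paper expands the product of binomials into factorials; your added checks (that $\Delta\le m$ so the rows exist, and that the sign pattern is shift-invariant) are correct but not essential.
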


\begin{proof}
Let $v$ be as in Proposition \ref{niceidea}, i.e., let $v_i=\binom n{i+1}$. We notice that the ``effective length'' of the vector $v$ is $n-1$, while that of $A\mathbf{d}_{G}$ is $n+\Delta(G)-1$. Therefore the Betti number $\beta_{i,i+1}$ is equal to the entry $(A\mathbf{d}_{G})_i$, for every $i=n,\dots,n+\Delta(G)-1$, whereas $\beta_{n-1,n}=(A\mathbf{d}_{G-v})_{n-1}=(A\mathbf{d}_{G})_{n-1}-1$. The entries of the matrix $A$ that we use in these computations are just
$$A_{ij},\qquad\text{for}\quad i=n-1,\dots,n+\Delta(G)-1,\quad j=1,\dots,\Delta(G)+1,$$
and therefore we define the square matrix $B$ as in the statement above. More explicitly, this submatrix of $A$ has the form
$$B=\left(\begin{array}{ccccc}
1 & B_{12} & B_{13} & \dots & B_{1,\Delta(G)+1} \\
0 & 1 & B_{23} & \dots & B_{2,\Delta(G)+1}\\
0 & 0 & 1 & \dots & B_{3,\Delta(G)+1}\\
\vdots & \vdots & \ddots & \ddots & \vdots\\
0 & \dots &\dots&0&1
\end{array}\right).$$
This matrix is invertible and so we  have the stated formula. 
It only remains to prove that the matrix $C$ with entries $C_{ij}:=(-1)^{i+j}B_{ij}$ is the inverse of $B$. Next we check that $BC$ is the identity matrix: it is clear that $(BC)_{ij}=0$ if $j<i$, and also in the sum
$$(BC)_{ii}=(-1)^i\sum_{k=1}^{\Delta(G)+1}(-1)^k\binom{k+n-2}{i+n-2}\binom{i+n-2}{k+n-2}$$ the only non-zero summand corresponds to $k=i$ and it is $1$. So let $i<j$ and consider
\begin{align*}
(BC)_{ij}&=(-1)^j\sum_{k=i}^j(-1)^k\binom{k+n-2}{i+n-2}\binom{j+n-2}{k+n-2}\\
&=(-1)^{i+j}\frac{(j+n-2)!}{(i+n-2)!}\sum_{k=0}^{j-i}(-1)^k\frac1{k!(j-i-k)!}.
\end{align*}
This is zero because $\sum_{k=0}^N(-1)^k\frac1{k!(N-k)!}=\frac1{N!}\sum_{k=0}^N(-1)^k\binom Nk$ is zero for all $N>0$ (see  Lemma~\ref{lem:thesumiszero}).
\end{proof}

\subsection{From degree vector to Boij--S\"oderberg coefficients}\label{fdvtobsc}

For a graph $G$, by \emph{Boij--S\"oderberg coefficients of $G$} we mean the Boij--S\"oderberg coefficients of $S/I_G$, where $I_G$ is the edge ideal of $G$.

\begin{theorem}\label{thm:main}
Let $G$ be a graph with $n$ vertices and $m\ge n$ edges, and let $\mathbf{d}_{G}=(d_0,d_1,\ldots,d_{n-1})$ be its  degree vector.
Then the $j$th Boij--S\"oderberg coefficient of $BL(G)$ is 
$$c_j=\left\lbrace\begin{array}{ll}
	0 &\text{if } j\leq n-2,\\
	&\\
	\frac{d_0}{j}+\frac{\sum_{i=1}^{n-1}d_i}{j(j+1)}-\frac{n}{j(j+1)}=\frac{d_0}n&\text{if } j=n-1,\\
	&\\
	\frac{d_{j-n+1}}{j}+\frac{\sum_{i=j-n}^{n-1}d_i}{j(j+1)} &\text{if } n-1<j\leq 2n-2,\\
	&\\
	0 & \text{if }j>2n-2.
\end{array}\right.$$
\end{theorem}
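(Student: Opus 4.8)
\section*{Proof proposal}

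The plan is to combine the Betti-number formula of Proposition~\ref{niceidea} with the Betti-to-coefficients conversion of Lemma~\ref{lem:BSfromES}. Writing $\omega=\omega(BL(G))$ for the reduced Betti vector, Lemma~\ref{lem:BSfromES} gives $c=\omega\,\Omega^{-1}$, while Proposition~\ref{niceidea} gives $\omega=A\mathbf{d}_G-v$. Since the passage $\mathbf{d}_G\mapsto c$ is affine, this yields
\[
c=\mathbf{d}_G^{T}\bigl(A^{T}\Omega^{-1}\bigr)-v^{T}\Omega^{-1},
\qquad\text{i.e.}\qquad
c_j=\sum_{k=1}^{n}d_{k-1}\bigl(A^{T}\Omega^{-1}\bigr)_{kj}-\bigl(v^{T}\Omega^{-1}\bigr)_j.
\]
So the whole statement reduces to computing the $n\times(n+m-1)$ matrix $A^{T}\Omega^{-1}$ and the vector $v^{T}\Omega^{-1}$ explicitly, and then reading off the cases.

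For $A^{T}\Omega^{-1}$: the $k$-th row is $\bigl(\binom{k+n-2}{i}\bigr)_i$ multiplied by $\Omega^{-1}$, hence has $j$-th entry $\sum_i\binom{k+n-2}{i}(-1)^{i-j}\tfrac1i\binom{i+1}{j+1}$. The key manipulation is to remove the factor $\tfrac1i$ by Pascal's rule $\binom{i+1}{j+1}=\binom{i}{j}+\binom{i}{j+1}$ together with $\tfrac1i\binom{i}{j}=\tfrac1j\binom{i-1}{j-1}$ and $\tfrac1i\binom{i}{j+1}=\tfrac1{j+1}\binom{i-1}{j}$; each of the two resulting alternating sums is of the shape $\sum_i(-1)^i\binom{N}{i}\binom{i-1}{\ell}$ and is evaluated by the second identity of Lemma~\ref{littlebinomialformulas} (reading $\binom{i-1}{\ell}$ as $\binom{s+i}{\ell}$ with $s=-1$), equal to $(-1)^{\ell}$ if $\ell\ge N$ and $0$ otherwise. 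Reassembling, we expect
\[
\bigl(A^{T}\Omega^{-1}\bigr)_{kj}=
\begin{cases}
\tfrac{1}{j(j+1)} & j\le k+n-3,\\
\tfrac1{j} & j=k+n-2,\\
0 & j\ge k+n-1,
\end{cases}
\]
and the analogous, slightly easier, computation with $\binom{n}{i+1}$ in place of $\binom{k+n-2}{i}$ should give $\bigl(v^{T}\Omega^{-1}\bigr)_j=\tfrac{n}{j(j+1)}$ for $j\le n-1$ and $0$ otherwise.

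Substituting these two formulas into the expression for $c_j$ and separating the ranges $j\le n-2$, $j=n-1$, $n-1<j\le 2n-2$, $j>2n-2$ then yields the stated values; the relation $\sum_{i=0}^{n-1}d_i=n$ (the total number of vertices of $G$) is exactly what collapses the entries with $j\le n-2$ to $0$ and produces the value $d_0/n$ at $j=n-1$. It is worthwhile to sanity-check the outcome against Example~\ref{examplebn} and, independently, against the identity $c\,\Omega=\omega$.

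The main obstacle is the bookkeeping in the two matrix-product computations. The closed forms coming from Lemma~\ref{littlebinomialformulas} assume a summation over all integers, whereas the index $i$ in $\Omega^{-1}$ runs only over $i\ge1$ (the entry $\tfrac1i$ excludes $i=0$); moreover, splitting $\binom{i+1}{j+1}$ reintroduces a nonzero term at $i=0$. So each alternating sum must first be extended to the full range and then corrected by its $i=0$ (and, for $v$, also $i=-1$) term, and these corrections are precisely what create the case distinctions at the ends of the ranges; Lemma~\ref{lem:thesumiszero} offers an alternative handle on such sums if needed. Once the two products are pinned down, the remaining case analysis is purely formal.
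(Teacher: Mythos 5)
Your proposal is correct and follows essentially the same route as the paper: both push $\omega(BL(G))=A\mathbf{d}_G-v$ through $\Omega^{-1}$ and then evaluate the inner alternating sums $\sum_i(-1)^i\tfrac1i\binom{i+1}{j+1}\binom{k+n-2}{i}$, arriving at exactly the three regimes $\tfrac{1}{j(j+1)}$, $\tfrac1j$, $0$ that you state for $A^{T}\Omega^{-1}$ and at $\tfrac{n}{j(j+1)}$ for the $v$-part. The only difference is cosmetic: the paper evaluates the inner sums by rewriting $\tfrac1i\binom{i+1}{j+1}=\tfrac{1}{j(j+1)}(i+1)\binom{i-1}{j-1}$ and applying Lemma~\ref{lem:thesumiszero} to the degree-$j$ polynomial $(x+1)\binom{x-1}{j-1}$, whereas you split via Pascal's rule and use the second identity of Lemma~\ref{littlebinomialformulas} with $s=-1$; both routes handle the $i=0$ boundary corrections you rightly flag, and your claimed intermediate formulas check out.
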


\begin{proof}
Putting Lemma~\ref{lem:BSfromES} and~\eqref{bellapensata} together, we find that the $j$th Boij--S\"oderberg coefficient of $BL(G)$ is
\begin{align*}c_j&=(-1)^j\sum_{k=1}^nd_{k-1}\sum_{i=1}^{n+m-1}(-1)^i\frac1i\binom{i+1}{j+1}\binom{k+n-2}i\\
&\qquad+(-1)^{j+1}\sum_{i=1}^{n+m-1}(-1)^i\frac1i\binom{i+1}{j+1}\binom{n}{i+1}\\
&=
\left\lbrace\begin{array}{ll}
(-1)^j\sum_{k=1}^nd_{k-1}\sum_{i=1}^{n+m-1}(-1)^i\frac1i\binom{i+1}{j+1}\binom{k+n-2}i-\frac n{j(j+1)} & \text{if }j\le n-1,\\
&\\
(-1)^j\sum_{k=1}^nd_{k-1}\sum_{i=1}^{n+m-1}(-1)^i\frac1i\binom{i+1}{j+1}\binom{k+n-2}i & \text{if $j>n-1$.}
\end{array}\right.
\end{align*}

For convenience of notation, write $b_{j,k}:=\sum_{i=1}^{n+m-1}(-1)^i\frac1i\binom{i+1}{j+1}\binom{k+n-2}i$.
Clearly for $j>2n-2$ we have that $b_{j,k}=0$ as every summand is zero. 
Now fix $k$ and suppose $j\leq 2n-2$.  In order for $b_{j,k}$ to be non-zero, $j\le k+n-2$ must be satisfied. In the boundary condition $j=k+n-2$ we can easily compute the sum, as there is only the value $i=j$ for which both the binomial coefficients are non-zero. Hence the sum is
$$b_{j,j-n+2}=(-1)^j/j.$$
Finally, if $j<k+n-2$, then
\begin{align*}
b_{j,k}&=\sum_{i=1}^{n+m-1}(-1)^i\frac1i\binom{i+1}{j+1}\binom{k+n-2}i\\
&=\frac{1}{j(j+1)}\sum_{i=1}^{n+m-1}(-1)^i(i+1)\binom{i-1}{j-1}\binom{k+n-2}i.
\end{align*}
Consider now the polynomial $P(x):=(x+1)\binom{x-1}{j-1}$, which has degree $j$. If we set $N:=k+n-2$, by Lemma~\ref{lem:thesumiszero} we have that
\begin{align*}
b_{j,k}&=\frac{1}{j(j+1)}\sum_{i=1}^{n+m-1}(-1)^i(i+1)\binom{i-1}{j-1}\binom{k+n-2}i\\
&=\frac{1}{j(j+1)}\bigg[\sum_{i=0}^{n+m-1}(-1)^i(i+1)\binom{i-1}{j-1}\binom{k+n-2}i-(-1)^{j+1}\bigg]\\
&=\frac{(-1)^j}{j(j+1)}.
\end{align*}
Putting these things together, we find the stated expressions for the Boij--S\"oderberg coefficients.
\end{proof}

\begin{example}\label{examplebs}
Consider again the graph $G$ in Example~\ref{examplebn}. Recall that
$$\begin{array}{rcl}
\pi_7 & = & (28 , 112 , 210, 224, 140, 48, 7, 0, 0, 0, 0, 0, 0, 0 ),  \\
\pi_8 & =  & (36, 168, 378, 504, 420, 216, 63, 8, 0, 0, 0, 0, 0, 0 ), \\
\pi_9 & =  & (45, 240, 630, 1008, 1050, 720, 315, 80, 9, 0, 0, 0, 0, 0 ). 
\end{array}$$
With Theorem~\ref{thm:main}, we can easily find that the Betti vector for $BL(G)$, computed in Example~\ref{examplebn}, can  be encoded as
$$ ( 37 , 177 , 413 , 581 , 525 , 307 , 112 , 23 , 2 , 0 , 0 , 0 , 0, 0) =
\frac{1}{8} \pi_7 + \frac{47}{72}\pi_8 + \frac{2}{9}\pi_9.$$
Or to be more concise, the only non-zero  Boij--S\"oderberg coefficients are
\begin{align*} 
c_7 &= \frac{d_1}{7} + \frac{d_2+d_3+d_4+d_5+d_6}{7\cdot 8}=\frac{1}{8},\\
c_8 &= \frac{d_2}{8} + \frac{d_3+d_4+d_5+d_6}{8\cdot 9} =\frac{47}{72},\\
c_9 &= \frac{d_3}{9} + \frac{d_4+d_5+d_6}{9\cdot 10} = \frac{2}{9}
\end{align*}
where $d_i$ is the number of vertices of degree $i$ in $G$.
\end{example}

\subsection{From degree vector to anti-lecture hall compositions}\label{sec:antilec}

For a brief introduction to anti-lecture hall compositions, see Section~\ref{sec:antilecintro}.
For an illustration of the next result, see Example~\ref{antilectureexample}.

\begin{proposition}\label{fromdegreetoantilec}
Let $G$ be a graph with $n$ vertices and $m$ edges, and assume that $m\ge n-1$. Denote by $d_k$ the number of vertices of degree $k$ in $G$ and denote by $\lambda$ the anti-lecture hall composition associated to $BL(G)$. Then we have
$$\lambda_j=
\begin{cases}
j & \text{for }j=1,\dots,n,\\
d_{n-1}+d_{n-2}+\dots+d_{j-n+1}& \text{for }j=n,\dots,2n-2,\\
0 & \text{for }j>2n-2.
\end{cases}$$
Note that in particular for $j=n$ we get
$\lambda_n=d_{n-1}+d_{n-2}+\dots+d_0=n$. 
\end{proposition}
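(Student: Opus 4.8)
The plan is to apply Lemma \ref{ESantilecfrombetti}, which tells us that $\lambda = \omega(S/I_{BL(G)})\,\Psi^{-1}$, where $\Psi_{ij} = \binom{i-1}{j-1}$. The key point is that the inverse of $\Psi$ is the well-known signed Pascal matrix: $(\Psi^{-1})_{ij} = (-1)^{i-j}\binom{i-1}{j-1}$. Combining this with the explicit formula $\omega(BL(G)) = A\mathbf{d}_G - v$ from Proposition \ref{niceidea}, where $A_{ij} = \binom{j+n-2}{i}$ and $v_i = \binom{n}{i+1}$, we get
\[
\lambda_j = \sum_i \omega(BL(G))_i\,(-1)^{i-j}\binom{i-1}{j-1}
= \sum_i\bigg(\sum_{k=1}^n\binom{k+n-2}{i}d_{k-1} - \binom{n}{i+1}\bigg)(-1)^{i-j}\binom{i-1}{j-1}.
\]
So everything reduces to evaluating, for each fixed $k$, the alternating sum $\sum_i(-1)^{i-j}\binom{i-1}{j-1}\binom{k+n-2}{i}$, and separately the correction sum $\sum_i(-1)^{i-j}\binom{i-1}{j-1}\binom{n}{i+1}$.

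First I would handle the main sum. Setting $N = k+n-2$ and reindexing, the sum $\sum_i(-1)^{i-j}\binom{i-1}{j-1}\binom{N}{i}$ can be attacked either by the second identity in Lemma \ref{littlebinomialformulas} (Vandermonde's second form, after massaging $\binom{i-1}{j-1}$ into the right shape) or more robustly by Lemma \ref{lem:thesumiszero}: the polynomial $P(x) = \binom{x-1}{j-1}$ has degree $j-1 < N+1$ whenever $j \le N$, so the full alternating sum over $i=0,\dots,N$ vanishes, and one extracts the boundary terms. I expect this to collapse to something very simple — roughly, $\binom{k+n-2}{j-1}$ up to sign, reflecting that applying the inverse Pascal matrix to a column of Pascal's triangle shifts the index. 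In fact the cleanest route is to recognize that $\sum_i(-1)^{i-j}\binom{i-1}{j-1}\binom{N}{i}$ counts, with signs, and should give $\binom{N-1}{j-1} - \binom{N-1}{j}$ type telescoping, but I would verify the exact closed form by a direct generating-function manipulation: $\sum_i \binom{i-1}{j-1}x^i$ has a known rational form, and substituting appropriately isolates the coefficient. The correction term $\sum_i(-1)^{i-j}\binom{i-1}{j-1}\binom{n}{i+1}$ is of the same type with $N$ replaced by $n$ and a shift by one, so it is handled identically.

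Once the per-$k$ contribution is in closed form, the final step is bookkeeping: sum over $k$ with weights $d_{k-1}$ and subtract the correction. I anticipate that for $j \le n$ the main sum contributes a constant independent of the $d_i$ (because $\binom{k+n-2}{j-1}$ in that range behaves uniformly, or the contributions telescope across $k$ using $\sum_{k}d_{k-1}=n$ and $\sum_k (k-1)d_{k-1}=2m$) and matches the correction term up to leaving exactly $\lambda_j = j$; for $n < j \le 2n-2$ only the top degrees $k-1 \ge j-n+1$ survive, yielding the partial sum $d_{n-1}+\dots+d_{j-n+1}$; and for $j > 2n-2$ every binomial $\binom{k+n-2}{j-1}$ with $k \le n$ vanishes, giving $\lambda_j = 0$. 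The special value $\lambda_n = \sum_{i=0}^{n-1} d_i = n$ is then just the consistency check at the overlap of the two regimes, which also confirms the piecewise formula glues correctly. The main obstacle is nailing the exact closed form of the alternating binomial sum $\sum_i(-1)^{i-j}\binom{i-1}{j-1}\binom{N}{i}$ including the boundary corrections when $j$ is close to $N$ — getting the off-by-one bookkeeping right there is where an error is most likely to creep in, so I would double-check it against the worked Example \ref{antilectureexample} (the graph on $7$ vertices) before trusting the general computation.
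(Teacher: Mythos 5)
Your overall strategy is exactly the one the paper uses: apply Lemma~\ref{ESantilecfrombetti} with the signed Pascal inverse $(\Psi^{-1})_{ij}=(-1)^{i-j}\binom{i-1}{j-1}$, substitute $\omega(BL(G))=A\mathbf{d}_G-v$ from Proposition~\ref{niceidea}, and evaluate the resulting alternating sums via Lemma~\ref{lem:thesumiszero} with $P(x)=\binom{x-1}{j-1}$. The genuine gap is that you never carry out the one evaluation on which everything hinges, and both closed forms you conjecture for it are wrong. The correct value is
\[
\sum_{i}(-1)^{i-j}\binom{i-1}{j-1}\binom{N}{i}=\begin{cases}1&\text{if }j\le N,\\ 0&\text{if }j>N,\end{cases}
\]
not ``$\binom{N}{j-1}$ up to sign'' and not ``$\binom{N-1}{j-1}-\binom{N-1}{j}$'': since $\deg P=j-1<N$, the full sum $\sum_{i=0}^{N}(-1)^iP(i)\binom Ni$ vanishes, the terms with $1\le i\le j-1$ are zero, and the only surviving boundary term is $i=0$ with $P(0)=\binom{-1}{j-1}=(-1)^{j-1}$, which yields exactly $1$. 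This is not a cosmetic point. With your guessed value $\binom{k+n-2}{j-1}$, the weighted sum $\sum_k d_{k-1}\binom{k+n-2}{j-1}$ involves higher moments of the degree distribution for $j\ge3$ and cannot collapse to the constant $j$ using only $\sum_k d_{k-1}=n$ and $\sum_k(k-1)d_{k-1}=2m$; and for $j=2n-1$, $k=n$ one has $\binom{2n-2}{2n-2}=1\ne0$, so your guess would give $\lambda_{2n-1}=d_{n-1}$ rather than $0$. Only because the sum is the $0$--$1$ indicator $[\,j\le k+n-2\,]$ (so that $\Xi A:=(\Psi^{-1})^TA$ is a staircase of ones, as in Example~\ref{antilectureexample}) does the $k$-bookkeeping you describe actually produce $n$ for $j\le n-1$ and $d_{j-n+1}+\dots+d_{n-1}$ for $n\le j\le 2n-2$.

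A second, smaller inaccuracy: the correction sum $\sum_i(-1)^{i-j}\binom{i-1}{j-1}\binom{n}{i+1}$ is not ``handled identically.'' After shifting the summation index one needs the different polynomial $\binom{x-2}{j-1}$ and must peel off \emph{two} boundary terms ($i=0$ and $i=1$); the result is $n-j$ for $j<n$ and $0$ for $j\ge n$ --- a linear ramp, not another indicator. Your plan would likely succeed if these two evaluations were done carefully (and your instinct to check against Example~\ref{antilectureexample} would catch the errors), but as written the pivotal computation is replaced by incorrect guesses from which the stated conclusion does not follow.
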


\begin{proof}
By applying Lemma~\ref{ESantilecfrombetti} to $BL(G)$, we have $\lambda=\omega(BL(G))\Psi^{-1}$, where $\Psi$ is the invertible square matrix of size $n+m-1$ with entries $\Psi_{ij}=\binom{i-1}{j-1}$ and $\lambda=(\lambda_1,\dots,\lambda_{n+m-1})$. Using~\eqref{bellapensata} and denoting $\Xi:=(\Psi^{-1})^T$, we find
\begin{equation}\label{vacca}
\lambda^T=(\Psi^{-1})^TA\mathbf{d}_{G}-(\Psi^{-1})^Tv=\Xi A\mathbf{d}_{G}-\Xi v.
\end{equation}
Thus we only need to see explicitly what $\Xi A$ and $\Xi v$ are. We claim that $\Xi A$ is of the following form (see also Example~\ref{antilectureexample}): the entry $(\Xi A)_{jk}$ is $1$ for $j\le k+n-2$ and it is $0$ for $j>k+n-2$. To prove this, let $j\le k+n-2$ and note that
\begin{align*}
(\Xi A)_{jk}&=\sum_{i=1}^{n+m-1}\Xi_{ji}A_{ik}=\sum_{i=1}^{n+m-1}(-1)^{j+i}\binom{i-1}{j-1}\binom{k+n-2}i\\
&=(-1)^j\sum_{i=j}^{k+n-2}(-1)^i\binom{i-1}{j-1}\binom{k+n-2}i\\
&=(-1)^j\bigg[\sum_{i=0}^{k+n-2}(-1)^iP(i)\binom{k+n-2}i-P(0)\bigg]\\
&=(-1)^{j-1}P(0)=1,
\end{align*}
where we apply Lemma~\ref{lem:thesumiszero} by considering the polynomial
$$P(x):=\binom{x-1}{j-1}=\frac{(x-1)(x-2)\dots(x-j+1)}{(j-1)!}.$$ Moreover, for $j>k+n-2$ every term in the sum expressing the entry $(\Xi A)_{jk}$ vanishes, and then $(\Xi A)_{jk}=0$. 

For what concerns $\Xi v$, in order to obtain the formulas for the $\lambda_j$'s starting from~\eqref{vacca}, we only need to prove that 
$$(\Xi v)_j=\begin{cases}
n-j &\text{if }j< n\\
0 &\text{if }j\ge n.
\end{cases}$$ This holds since for $j<n$ we have
\begin{align*}
(\Xi v)_j &= \sum_{i=1}^{n+m-1}\Xi_{ji} v_i=\sum_{i=1}^{n+m-1}(-1)^{j+i}\binom{i-1}{j-1}\binom n{i+1}\\
&=(-1)^j\sum_{i=j}^{n-1}(-1)^i\binom{i-1}{j-1}\binom n{i+1}\\
&=(-1)^{j+1}\sum_{i=j+1}^n(-1)^i\binom{i-2}{j-1}\binom ni\\
&=(-1)^{j+1}\bigg[\sum_{i=0}^n(-1)^iP(i)\binom ni-P(0)+P(1)n\bigg]\\
&=(-1)^{j+1}\big[-(-1)^{j+1}j+(-1)^{j+1}n\big]\\
&=n-j,
\end{align*}where the polynomial we choose this time, in order to apply Lemma~\ref{lem:thesumiszero}, is
$$Q(x):=\binom{x-2}{j-1}=\frac{(x-2)(x-3)\dots(x-j)}{(j-1)!},$$
and for $j\ge n$ we have $(\Xi w)_j=0$ as every term in the big sum has as a multiplying factor a vanishing binomial coefficient.
\end{proof}

\begin{example}\label{antilectureexample}
Consider again the graph $G$ from Example~\ref{examplebn} and Example~\ref{examplebs}. By Proposition~\ref{fromdegreetoantilec}, we find that
\[ \begin{array}{rcl}
\lambda_1 & = & 1 \\
\lambda_2 & = & 2 \\
\lambda_3 & = & 3 \\
\lambda_4 & = & 4 \\
\lambda_5 & = & 5 \\
\lambda_6 & = & 6 \\
\lambda_7 & = & 7 \\
\lambda_8 & = &  d_3 + d_2=7   \\
\lambda_9 & = &  d_3 =2   \\
\lambda_{10}=\lambda_{11}=\lambda_{12} & = & 0.
\end{array} \]
Moreover, the matrix and vector featured in the proof of Proposition~\ref{fromdegreetoantilec} have the following forms:
$$\Xi A=\left(\begin{array}{ccccccc}
1&1&1&1&1&1&1\\
1&1&1&1&1&1&1\\
1&1&1&1&1&1&1\\
1&1&1&1&1&1&1\\
1&1&1&1&1&1&1\\
1&1&1&1&1&1&1\\
0&1&1&1&1&1&1\\
0&0&1&1&1&1&1\\
0&0&0&1&1&1&1\\
0&0&0&0&1&1&1\\
0&0&0&0&0&1&1\\
0&0&0&0&0&0&1\\
0&0&0&0&0&0&0\\
0&0&0&0&0&0&0\\
\end{array}\right),\qquad
\Xi v=\left(\begin{array}{c}
6\\
5\\
4\\
3\\
2\\
1\\
0\\
0\\
0\\
0\\
0\\
0\\
0\\
0\\
\end{array}\right).$$
\end{example}

\section{About the complement $\overline{BL(G)}$}\label{sec:mainComp}

\begin{proposition}
Let $G$ be a graph with $n$ vertices and $m$ edges. Then, for every integer $j\ge1$ we have
$$\beta_{j,j+1}\big(\overline{BL(G)}\big)=m\binom{m+n-3}j-\binom m{j+1}.$$
\end{proposition}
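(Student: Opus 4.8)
The plan is to apply Theorem~\ref{bettibyconncomp} directly, since the complement of $H:=\overline{BL(G)}$ is $BL(G)$, which is chordal. This gives
\[
\beta_{j,j+1}\big(\overline{BL(G)}\big)=\sum_{\substack{W\subseteq V(G)\cup E(G)\\ \#W=j+1}}\big(-1+c(BL(G)[W])\big),
\]
where $c(\cdot)$ is the number of connected components. I would then use the ``left/right'' description of $BL(G)$ recalled before this section: the left part $V(G)$ induces a clique on $n$ vertices, the right part $E(G)$ is an independent set on $m$ vertices, and an edge-vertex $e=uv$ is adjacent only to the two left vertices $u$ and $v$.

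Next I would split the sum according to whether $W$ meets the left part. If $W\subseteq E(G)$, then $BL(G)[W]$ is $j+1$ isolated vertices, so the summand is $j$; there are $\binom{m}{j+1}$ such $W$, for a total contribution of $j\binom{m}{j+1}$. If $W$ contains at least one left vertex, then all left vertices of $W$ lie in one component (they form a clique), every edge-vertex of $W$ having an endpoint in $W$ is attached to that component, and the only further components are the edge-vertices $e=uv\in W$ with $u,v\notin W$. Hence in this case the summand $-1+c(BL(G)[W])$ equals the number of such ``isolated'' edge-vertices of $W$.

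For the second part I would switch the order of summation: for a fixed edge $e=uv$ of $G$, count the sets $W$ with $\#W=j+1$, $e\in W$, $u,v\notin W$, and at least one left vertex. The remaining $j$ elements of $W$ are chosen from the $(n-2)+(m-1)=m+n-3$ vertices of $(V(G)\setminus\{u,v\})\cup(E(G)\setminus\{e\})$, and removing the choices that use no left vertex gives $\binom{m+n-3}{j}-\binom{m-1}{j}$ possibilities; summing over the $m$ edges of $G$ contributes $m\binom{m+n-3}{j}-m\binom{m-1}{j}$. Combining the two parts,
\[
\beta_{j,j+1}\big(\overline{BL(G)}\big)=j\binom{m}{j+1}+m\binom{m+n-3}{j}-m\binom{m-1}{j},
\]
and the elementary identity $m\binom{m-1}{j}=(j+1)\binom{m}{j+1}$ collapses this to $m\binom{m+n-3}{j}-\binom{m}{j+1}$, as claimed. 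There is no genuine obstacle here beyond careful bookkeeping; the one point that deserves a line of justification is that, when $W$ meets the left part, the components of $BL(G)[W]$ really are exactly the ``big'' component together with the isolated edge-vertices — in particular that an edge-vertex with a single endpoint in $W$ joins the big component — which is immediate from the adjacency description of $BL(G)$ above.
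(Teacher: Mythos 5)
Your proof is correct, and it takes a genuinely different route from the paper. The paper proceeds in two stages: it first extends the Booth--Lueker construction to multigraphs and proves an ``edge-moving'' invariance (replacing an edge $uv$ by $uw$ leaves the Betti numbers of the complement unchanged, via a bijection between the subsets $W$ where the component count goes up and those where it goes down), and then it reduces every $G$ with $n$ vertices and $m$ edges to a single canonical multigraph --- the ``pineapple'' with all $m$ edges between two fixed vertices --- for which the component counts are computed explicitly. You instead apply the Dochtermann--Engstr\"om formula directly to $\overline{BL(G)}$ for an arbitrary $G$: after observing that for $W$ meeting the left clique the summand $-1+c(BL(G)[W])$ is exactly the number of edge-vertices $e=uv\in W$ with $u,v\notin W$, you switch the order of summation over edges $e$ and obtain $m\bigl[\binom{m+n-3}{j}-\binom{m-1}{j}\bigr]$ plus the all-right-part contribution $j\binom{m}{j+1}$, which collapses to the stated formula via $m\binom{m-1}{j}=(j+1)\binom{m}{j+1}$. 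Your argument is shorter and avoids the detour through multigraphs; what the paper's approach buys is the invariance lemma itself, which conceptually explains \emph{why} the answer depends only on $n$ and $m$ before any formula is computed. Your single point flagged for justification --- that when $W$ meets the left part the components are exactly the big clique-anchored one plus the isolated edge-vertices --- is indeed immediate from the adjacency structure, so there is no gap.
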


\begin{proof}
We prove the claim in two steps:
\begin{enumerate}
\item[(1)] We generalize the Booth--Lueker construction to multi-graphs and prove the following: let $G$ be a multi-graph with, in particular, vertices $u,v,w$ and an edge $uv$, and let $G'$ be the same multi-graph except for the fact that we remove that edge $uv$ and add an edge $uw$; then the Betti numbers of $\overline{BL(G)}$ and $\overline{BL(G')}$ are equal.
\item[(2)] We prove the stated formula for a particular multi-graph $H$: the one with all the $m$ edges between two fixed vertices, and $n-2$ other isolated vertices. Due to the fact that we can reach $H$ starting from any $G$ with $n$ vertices and $m$ edges iterating the transformation in the first step, and since the Betti numbers stay constant at each iteration, we will have the formula for any $G$.
\end{enumerate}

The generalization of the Booth--Lueker construction to multi-graphs is done in the natural intuitive way: $BL(G)$ will be a simple graph with a ``left part'' with the original vertices of $G$, all connected to each other with one edge, and a ``right part'' with as many vertices as the edges of $G$, connected to the respective ends in the left part.

Let us prove part $(1)$, by using the formula in Theorem~\ref{bettibyconncomp}. Let $e$ be the edge between $u$ and $v$ that we ``move'' between $u$ and $w$. The vertex sets of $BL(G)$ and $BL(G')$ can be written in the same way, the only difference is that the vertex $e$ in the right part of $BL(G)$ is connected to $u$ and $v$ on the left, whereas $e$ in the right part of $BL(G')$ is connected to $u$ and $v$.

Let $S$ be a subset of the vertices of $BL(G)$ and let's see how the number of connected components of $BL(G)[S]$ and $BL(G')[S]$ differ.
\begin{enumerate}
\item[(i)] If $e\notin S$, then $BL(G)[S]=BL(G')[S]$.
\item[(ii)] If $u\in S$, then $e$ is path-connected to $v$ and $w$ if they are in the graph, whether the ``moving edge'' is there or not, so the numbers of connected component of $BL(G)[S]$ and $BL(G')[S]$ are the same.
\item[(iii)]  Let $e\in S$ and $u\notin S$. Then we have three possibilities:
\begin{enumerate}
\item[(a)] If both $v$ and $w$ are in $S$, then the number of connected components of $BL(G)[S]$ and $BL(G')[S]$ are the same.
\item[(b)] If $v\in S$ and $w\notin S$, then the number of connected components of $BL(G)[S]$ increases by one after the move to $BL(G')[S]$, as $e$ becomes a new isolated vertex.
\item[(c)] If $v\notin S$ and $w\in S$, then the number of connected components of $BL(G)[S]$ decreases by one after the move to $BL(G')[S]$, as $e$ is no longer a new isolated vertex.
\end{enumerate}
\end{enumerate}
We have found the following: only if $e\in S$, $u\notin S$ and exactly one of $v$ and $w$ is in $S$ we have a change in the number of connected components. Consider the map
\begin{align*}
\varphi\colon\{S\subseteq V(BL(G))&\mid v\in S, w\notin S, e\in S, u\notin S\}\to\\
&\{S\subseteq V(BL(G))\mid v\notin S,w\in S,e\in S,u\notin S\}
\end{align*}
defined by $S\mapsto (S\cup\{w\})\setminus\{v\}$. Then $\#S=\#\varphi(S)$, and $\varphi$ is a bijection between those subsets there the number of connected components increases by one and those where it decreases by one. So in total the graded Betti numbers stay constant.

\begin{figure}
\begin{center}
\includegraphics[width=4cm
]{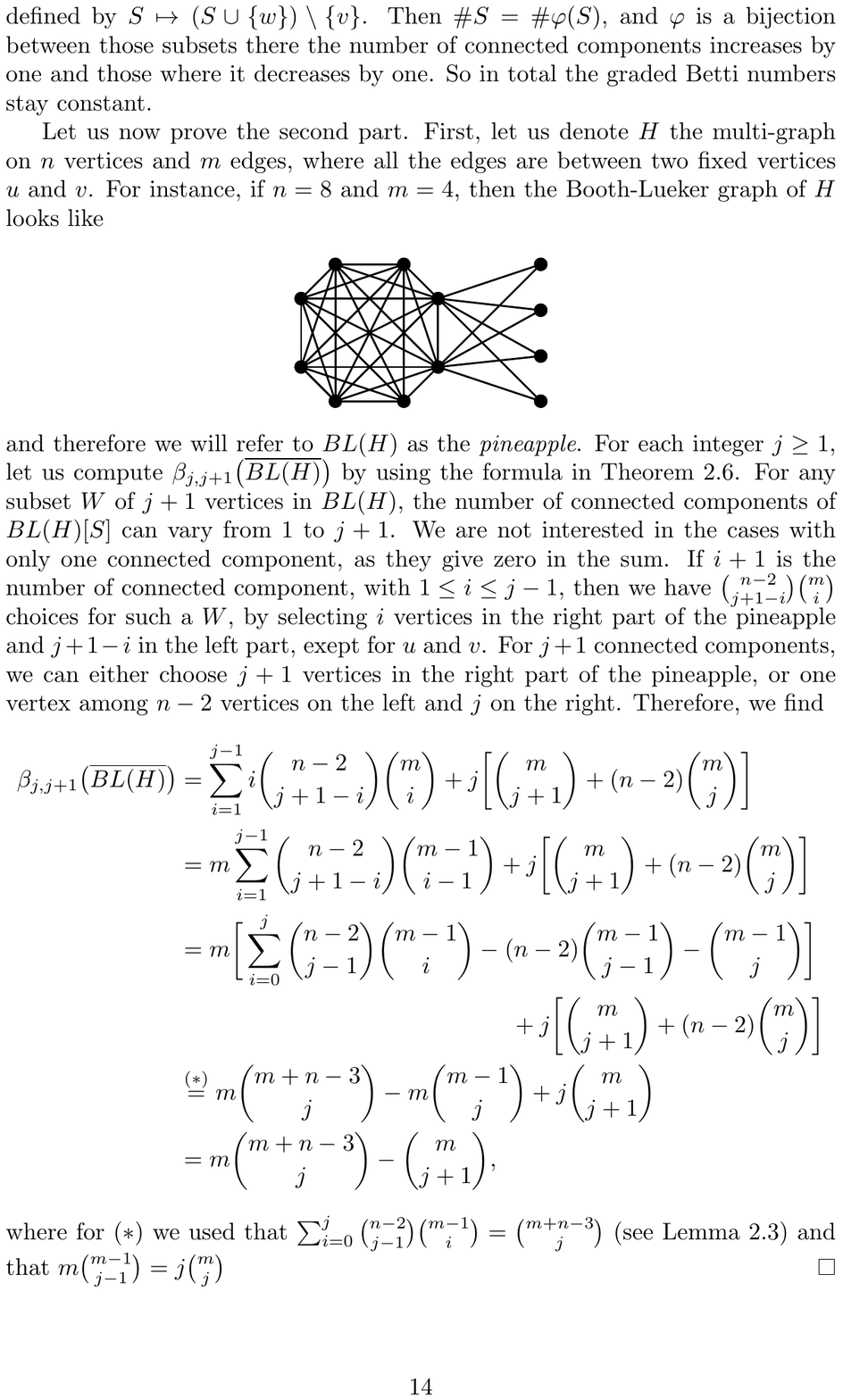}
\caption{The Booth--Lueker graph looking like a pineapple.}
\label{fig:pineapple}
\end{center}
\end{figure}

Let us now prove the second part. First, let us denote $H$ the multi-graph on $n$ vertices and $m$ edges, where all the edges are between two fixed vertices $u$ and $v$. For instance, if $n=8$ and $m=4$, then the Booth--Lueker graph of $H$ is depicted in Figure~\ref{fig:pineapple} and therefore we will refer to $BL(H)$ as the \emph{pineapple}. 

For each integer $j\ge1$, let us compute $\beta_{j,j+1}\big(\overline{BL(H)}\big)$ by using the formula in Theorem~\ref{bettibyconncomp}. For any subset $W$ of $j+1$ vertices in $BL(H)$, the number of connected components of $BL(H)[S]$ can vary from $1$ to $j+1$. We are not interested in the cases with only one connected component, as they give zero in the sum. If $i+1$ is the number of connected component, with $1\le i\le j-1$, then we have $\binom{n-2}{j+1-i}\binom mi$ choices for such a $W$, by selecting $i$ vertices in the right part of the pineapple and $j+1-i$ in the left part, exept for $u$ and $v$. For $j+1$ connected components, we can either choose $j+1$ vertices in the right part of the pineapple, or one vertex among $n-2$ vertices on the left and $j$ on the right. Therefore, we find
\begin{align*}
\beta_{j,j+1}\big(\overline{BL(H)}\big)&=\sum_{i=1}^{j-1}i\binom{n-2}{j+1-i}\binom mi+j\bigg[\binom m{j+1}+(n-2)\binom mj\bigg]\\
&=m\sum_{i=1}^{j-1}\binom{n-2}{j+1-i}\binom{m-1}{i-1}+j\bigg[\binom m{j+1}+(n-2)\binom mj\bigg]\\
&=m\bigg[\sum_{i=0}^j\binom{n-2}{j-1}\binom{m-1}i-(n-2)\binom{m-1}{j-1}-\binom{m-1}j\bigg]\\
&\!\qquad\qquad\qquad\qquad\qquad\qquad\qquad+j\bigg[\binom m{j+1}+(n-2)\binom mj\bigg]\\
&\stackrel{(*)}=m\binom{m+n-3}j-m\binom{m-1}j+j\binom m{j+1}\\
&=m\binom{m+n-3}j-\binom m{j+1},
\end{align*}
where for $(*)$ we used that $\sum_{i=0}^j\binom{n-2}{j-1}\binom{m-1}i=\binom{m+n-3}j$ (see Lemma~\ref{littlebinomialformulas}) and that $m\binom{m-1}{j-1}=j\binom mj$.
\end{proof}

\begin{theorem}\label{thm:mainComp}
Let $G$ be a graph with $n$ vertices and $m$ edges. Then the $i$-th Boij--S\"oderberg coefficient of $\overline{BL(G)}$ is
$$c_i=\left\lbrace\begin{array}{ll}
	0 &\text{if } i< m,\\
	&\\
	\frac m{(i+1)i}&\text{if } m\le i\le m+n-4,\\
	&\\
	\frac mi &\text{if } i=m+n-3,\\
	&\\
	0 & \text{if } i>m+n-3.
\end{array}\right.$$
\end{theorem}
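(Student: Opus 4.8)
The strategy is to mimic exactly the computation used for Theorem~\ref{thm:main}: feed the explicit Betti vector of $\overline{BL(G)}$ into Lemma~\ref{lem:BSfromES}. The preceding proposition gives us the reduced Betti vector of $\overline{BL(G)}$ in closed form, namely $\omega(\overline{BL(G)})=(\beta_{j,j+1})_j$ with $\beta_{j,j+1}=m\binom{m+n-3}{j}-\binom{m}{j+1}$. By Lemma~\ref{lem:BSfromES}, the vector $c$ of Boij--S\"oderberg coefficients is $c=\omega(\overline{BL(G)})\,\Omega^{-1}$, where $(\Omega^{-1})_{ij}=(-1)^{i-j}\frac1i\binom{i+1}{j+1}$. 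So I would write
\[
c_i=\sum_{j}\Big(m\binom{m+n-3}{j}-\binom{m}{j+1}\Big)(-1)^{i-j}\frac1i\binom{i+1}{j+1},
\]
the sum ranging over $j$ from $1$ to the length of the Betti vector (which we may take to be $n+m-1$ as in Lemma~\ref{lem:BSfromES}, since the complement of a split graph is split and the projective dimension is bounded accordingly). The whole problem is then to evaluate this alternating binomial sum, split into the two pieces coming from $m\binom{m+n-3}{j}$ and from $\binom{m}{j+1}$.

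For each of the two pieces I would proceed as in the proof of Theorem~\ref{thm:main}: pull out the factor $\frac1{i(i+1)}$ by using the identity $\frac1i\binom{i+1}{j+1}=\frac1{i(i+1)}(j+1)\binom{i-1}{j-1}$ (valid in the relevant range), rewrite the resulting sum as $\sum_{j}(-1)^{i-j}\binom{i+1}{j+1}$ times a polynomial in $j$ of controlled degree, extend the summation range to $j=0,\dots,i+1$ by adding the (explicitly computable) boundary terms, and then apply Lemma~\ref{lem:thesumiszero} to kill the main sum. Concretely, for the term $\binom{m}{j+1}$ one views $\binom{m}{j+1}$ as a polynomial $P(j)$ of degree $m+1$ (with integer values), so the full alternating sum $\sum_{j=0}^{i+1}(-1)^{i+1-j}\binom{i+1}{j+1}P(j)$ vanishes exactly when $m+1<i+1$, i.e. $i>m$; this produces the vanishing for $i>m+n-3$ together with the fact that $\binom{m+n-3}{j}$ also vanishes for $j>m+n-3$, and after assembling the boundary corrections it leaves precisely the stated values $\frac{m}{i(i+1)}$ on the range $m\le i\le m+n-4$, the value $\frac mi$ at the top index $i=m+n-3$ (where only the extreme $j$ survives, exactly as the $b_{j,j-n+2}=(-1)^j/j$ boundary case appeared before), and $0$ for $i<m$ (here the $-\binom{m}{j+1}$ piece contributes nothing since $\binom{m}{j+1}=0$ for $j\ge m$, while the $m\binom{m+n-3}{j}$ piece, with $i<m\le m+n-3$, is an alternating sum of a polynomial of degree $<i+1$ and hence vanishes by Lemma~\ref{lem:thesumiszero}).

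The main obstacle is bookkeeping rather than conceptual: one must be careful about the exact summation ranges, about which binomial coefficients vanish at the endpoints $j=i$, $j=i+1$, $j=m+n-3$, and about correctly collecting the non-vanishing boundary terms that remain after Lemma~\ref{lem:thesumiszero} is applied to the extended sums. In particular the case analysis $i<m$, $m\le i\le m+n-4$, $i=m+n-3$, $i>m+n-3$ has to be matched up with which of the two pieces ($m\binom{m+n-3}{j}$ versus $\binom{m}{j+1}$) is ``active'' and how far its polynomial degree reaches; getting the single leftover term $\frac mi$ at the top index exactly right (as opposed to $\frac m{i(i+1)}$) is the one spot where the degenerate Vandermonde boundary computation, rather than Lemma~\ref{lem:thesumiszero}, does the work. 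Everything else is the same alternating-sum manipulation already carried out in the proof of Theorem~\ref{thm:main}, so I would keep the exposition parallel to that proof and simply highlight the two differences (the extra $\binom{m}{j+1}$ summand and the shifted index range).
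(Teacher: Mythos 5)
Your overall strategy is exactly the paper's: substitute the closed form $\beta_{j,j+1}=m\binom{m+n-3}{j}-\binom{m}{j+1}$ from the preceding proposition into $c=\omega\,\Omega^{-1}$ from Lemma~\ref{lem:BSfromES}, split the sum into the two pieces, and evaluate the resulting alternating binomial sums. (The paper concludes via the second identity of Lemma~\ref{littlebinomialformulas} rather than Lemma~\ref{lem:thesumiszero}, but these are interchangeable here, and your choice parallels the proof of Theorem~\ref{thm:main}.) Carried out correctly, this plan does yield the theorem. However, two of your concrete steps are misstated and a third would actively fail. Since $c=\omega\Omega^{-1}$ is a row vector times a matrix, the correct expansion is $c_i=\sum_j\beta_{j,j+1}(\Omega^{-1})_{ji}=\sum_j\beta_{j,j+1}(-1)^{j-i}\frac1j\binom{j+1}{i+1}$; you have transposed the indices, and the identity you need to extract the factor $\frac1{i(i+1)}$ is $\frac1j\binom{j+1}{i+1}=\frac{j+1}{i(i+1)}\binom{j-1}{i-1}$, not the one you wrote (which is false as stated). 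Relatedly, $\binom{m}{j+1}$ is \emph{not} a polynomial of degree $m+1$ in the summation variable $j$: in Lemma~\ref{lem:thesumiszero} the weight $\binom Nj$ must be the binomial coming from the Betti numbers, namely $\binom{m+n-3}{j}$ for the first piece and $\binom{m-1}{j}$ for the second (after rewriting $(j+1)\binom{m}{j+1}=m\binom{m-1}{j}$), while the polynomial is the factor $(j+1)\binom{j-1}{i-1}$, resp.\ $\binom{j-1}{i-1}$, contributed by $\Omega^{-1}$ --- exactly the division of roles used in the proof of Theorem~\ref{thm:main}.

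The genuine gap is your explanation of the case $i<m$. The first piece does not vanish there: extending the sum to $j=0$ and applying Lemma~\ref{lem:thesumiszero} leaves the boundary term $-P(0)$ with $P(0)=\binom{-1}{i-1}=(-1)^{i-1}$, so for every $1\le i<m+n-3$ the first piece equals $+\frac{m}{i(i+1)}$. And the second piece does not ``contribute nothing'' for $i<m$: its summation range is $i\le j\le m-1$, which is nonempty precisely when $i<m$, and it evaluates to $-\frac{m}{i(i+1)}$ there (and to $0$ only when $i\ge m$). The vanishing $c_i=0$ for $i<m$ is a cancellation between the two pieces, not the separate vanishing of each; as written, your plan would output $\frac{m}{i(i+1)}$ instead of $0$ in that range. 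The remaining cases are as you describe: for $m\le i\le m+n-4$ only the first piece survives and gives $\frac{m}{i(i+1)}$; at $i=m+n-3$ only the extreme term $j=m+n-3$ of the first piece is nonzero and gives $\frac{m}{m+n-3}$, in analogy with the boundary evaluation $b_{j,j-n+2}=(-1)^j/j$ in Theorem~\ref{thm:main}; and for $i>m+n-3$ everything vanishes.
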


\begin{proof}
We use again the formula in Lemma~\ref{lem:BSfromES}. We find
\begin{align*}
c_i&=\sum_{j=1}^{n+m-1}\beta_{j,j+1}\big(\overline{BL(G)}\big)(-1)^{j-i}\frac1j\binom{j+1}{i+1}\\
&=m\sum_{j=1}^{n+m-1}(-1)^{j-i}\frac1j\binom{m+n-3}j\binom{j+1}{i+1}-\sum_{j=1}^{n+m-1}(-1)^{j-i}\frac1j\binom m{j+1}\binom{j+1}{i+1}\\
&=-\frac{m(m+n-3)}{(i+1)i}(-1)^{i+m+n}\sum_{j=0}^{n+m-4}(-1)^{m+n-4-j}\binom{m+n-4}j\binom j{i-1}\\
&\qquad+\frac m{(i+1)i}(-1)^{i+m+n+1}\sum_{j=0}^{n+m-4}(-1)^{m+n-3-j}\binom{m+n-3}j\binom{j-1}{i-1}\\
&\qquad-\frac m{(i+1)i}(-1)^{i+m+1}\sum_{j=0}^{m-1}(-1)^{m-1-j}\binom{m-1}j\binom{j-1}{i-1}-\frac m{(i+1)i},
\end{align*}
from which we conclude, by Lemma~\ref{littlebinomialformulas}.
\end{proof}

\begin{proposition}
Let $G$ be a simple graph with $n$ vertices and $m$ edges. The associated anti-lecture hall composition $\lambda=(\lambda_1,\dots,\lambda_{n+m-1})$ is such that
$$\lambda_j=
\begin{cases}
j &\text{if }j\le m,\\
m&\text{if }m<j\le m+n-3,\\
0&\text{if }j>m+n-3.
\end{cases}$$
\end{proposition}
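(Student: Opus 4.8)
The plan is to mirror the strategy of Proposition~\ref{fromdegreetoantilec}, but now using Lemma~\ref{ESantilecfrombetti} directly with the closed formula for the Betti numbers of $\overline{BL(G)}$ established just above. Concretely, I would set $\omega:=\omega\big(\overline{BL(G)}\big)$, so that $\omega_j=\beta_{j,j+1}\big(\overline{BL(G)}\big)=m\binom{m+n-3}j-\binom m{j+1}$ for $j=1,\dots,n+m-1$, and compute $\lambda=\omega\Psi^{-1}$, where $\Psi$ is the invertible $(n+m-1)\times(n+m-1)$ matrix with $\Psi_{ij}=\binom{i-1}{j-1}$. The inverse of $\Psi$ is well known (and is the $\Xi$ without transpose from the previous proof): $(\Psi^{-1})_{ij}=(-1)^{i-j}\binom{i-1}{j-1}$. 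Thus the task reduces to evaluating, for each $j$,
\[
\lambda_j=\sum_{i=1}^{n+m-1}\omega_i\,(-1)^{i-j}\binom{i-1}{j-1}
=(-1)^j\sum_{i}(-1)^i\binom{i-1}{j-1}\left[m\binom{m+n-3}i-\binom m{i+1}\right].
\]

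The computation then splits into two sums, exactly as in the proof of Theorem~\ref{thm:mainComp}. For the first piece, $(-1)^j\,m\sum_i(-1)^i\binom{i-1}{j-1}\binom{m+n-3}i$, I would apply Lemma~\ref{lem:thesumiszero} with the integer-valued polynomial $P(x)=\binom{x-1}{j-1}$ of degree $j-1<m+n-3$ whenever $j\le m+n-3$ (after completing the range of $i$ from $0$ to $m+n-3$ and correcting for the $i=0$ term $P(0)$), obtaining essentially $m\cdot P(0)$ up to sign, which is $0$ when $j\ge2$; the degenerate case $j=m+n-3$ must be handled separately because then $\deg P = j-1 = m+n-4$ is not less than $m+n-3$, and that is precisely where the extra contribution $m$ will come from. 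For the second piece, $-(-1)^j\sum_i(-1)^i\binom{i-1}{j-1}\binom m{i+1}$, I would reindex $i\mapsto i-1$ to turn $\binom m{i+1}$ into $\binom mi$, landing on a sum of the form $\sum_i(-1)^i\binom{i-2}{j-1}\binom mi$ which is again treated by Lemma~\ref{lem:thesumiszero} with $Q(x)=\binom{x-2}{j-1}$ of degree $j-1$, now requiring $j-1<m$, i.e. $j\le m$; correcting for the $i=0$ and $i=1$ boundary terms $Q(0)$ and $Q(1)m$ will produce the linear function $j$ for $j\le m$ and the constant $m$ for $m<j\le m+n-3$, in the same way the term $Q(x)$ produced $n-j$ in the earlier proof. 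For $j>m+n-3$ every binomial coefficient involved vanishes and $\lambda_j=0$ is immediate.

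Assembling the two pieces: for $1\le j\le m$ the first sum vanishes and the second gives $j$; for $m<j\le m+n-3$ the first sum vanishes (except at $j=m+n-3$, where it contributes $m$) and the second gives $m$ for $m<j<m+n-3$ while the boundary term bookkeeping also yields $m$ at $j=m+n-3$ after the first-sum contribution is added in and a compensating adjustment is made — the two effects are arranged to land on the single value $m$. I would present the case $j=m+n-3$ carefully and, if the arithmetic is cleaner, verify it via the alternative route of computing $\lambda_{m+n-3}$ as a telescoping remainder, or simply by checking the identity $\sum_i(-1)^i\binom{i-1}{m+n-4}\binom{m+n-3}i = (-1)^{m+n-3}$ directly (only $i=m+n-4$ and $i=m+n-3$ contribute). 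The main obstacle I anticipate is exactly this boundary accounting at $j=m+n-3$ and the interaction of the two $i=0,1$ correction terms with the sign bookkeeping — i.e. making sure the degree hypotheses in Lemma~\ref{lem:thesumiszero} are met in each range and that the leftover terms combine to the clean piecewise formula rather than something off by a sign or a shift.
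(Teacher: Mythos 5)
Your overall strategy is viable and genuinely different from the paper's. Both start from Lemma~\ref{ESantilecfrombetti} and the closed formula $\beta_{j,j+1}(\overline{BL(G)})=m\binom{m+n-3}{j}-\binom{m}{j+1}$, but the paper then evaluates the two resulting alternating sums with the \emph{second} identity of Lemma~\ref{littlebinomialformulas}, collapsing each into a single binomial coefficient with negative upper argument and reading off the cases, whereas you reuse Lemma~\ref{lem:thesumiszero} with boundary corrections, in the style of Proposition~\ref{fromdegreetoantilec}. That route does go through, but the accounting you describe is wrong in three specific places, and assembled as written it would not yield the stated formula.

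First, the correction term $P(0)$ for $P(x)=\binom{x-1}{j-1}=\frac{(x-1)(x-2)\cdots(x-j+1)}{(j-1)!}$ is obtained by evaluating the \emph{polynomial} at $x=0$, giving $P(0)=(-1)^{j-1}$, never $0$ --- this is exactly the point made explicit in the proof of Proposition~\ref{fromdegreetoantilec}. Hence the first sum does not vanish for $j\ge2$: one finds $m(-1)^{-j}\bigl(0-P(0)\bigr)=m$ for every $j$ with $1\le j\le m+n-3$. Second, there is no degenerate case at $j=m+n-3$: there $\deg P=j-1=m+n-4$, which \emph{is} strictly less than $N=m+n-3$, so Lemma~\ref{lem:thesumiszero} applies uniformly on the whole range; the value $m$ at $j=m+n-3$ is not a boundary phenomenon but the generic contribution of the first sum. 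Third, after reindexing, the second sum with $Q(x)=\binom{x-2}{j-1}$, $Q(0)=(-1)^{j-1}j$, $Q(1)=(-1)^{j-1}$ contributes $(-1)^{-j}\bigl(-Q(0)+mQ(1)\bigr)=j-m$ for $j\le m$ (not $j$), and contributes $0$ (not $m$) for $m<j\le m+n-3$, since every term $\binom{i-2}{j-1}\binom{m}{i}$ already vanishes in that range. The correct assembly is therefore $m+(j-m)=j$ for $j\le m$, $m+0=m$ for $m<j\le m+n-3$, and $0+0=0$ for $j>m+n-3$. With these corrections your proof is complete; as sketched, the claimed vanishing of the first sum and the spurious special case at $j=m+n-3$ are genuine errors.
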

\begin{proof}
If we apply Lemma~\ref{ESantilecfrombetti} to $\overline{BL(G)}$, since the corresponding matrix $\Psi^{-1}$ has $ij$-entry equal to $(-1)^{i+j}\binom{i-1}{j-1}$, we find that
\begin{align*}
\lambda_j &=\sum_{i=1}^{n+m-1}\beta_{i,i+1}\big(\overline{BL(G)}\big)\Psi_{ij}^{-1}\\
&=m\sum_{i=1}^{n+m-1}(-1)^{i+j}\binom{m+n-3}i\binom{i-1}{j-1}-\sum_{i=1}^{n+m-1}(-1)^{i+j}\binom m{i+1}\binom{i-1}{j-1}\\
&=(-1)^{j+n+m+1}m\sum_{i=0}^{n+m-1}(-1)^{m+n-3-i}\binom{m+n-3}i\binom{i-1}{j-1}\\
&\qquad+(-1)^{j+m}\sum_{i=0}^{n+m}(-1)^{m-i}\binom{i-2}{j-1}-(-1)^j\binom{-2}{j-1}\\
&=(-1)^{j+m+n+1}m\binom{-1}{j-m-n+2}+(-1)^{j+m}\binom{-2}{j-m-1}-(-1)^j\binom{-2}{j-1},
\end{align*}
where we applied Lemma~\ref{littlebinomialformulas}. Now, depending on $j$, we find the stated expressions for $\lambda_j$.
\end{proof}

\end{document}